\tikzset{commutative diagrams/.cd,arrow style=tikz,diagrams={>=stealth'}}
\renewcommand{\p@enumii}{}
\def\@enum@{\list{\csname label\@enumctr\endcsname}%
          {\usecounter{\@enumctr}\def\makelabel##1{
\normalfont\ignorespaces\emph{{##1}~}}
\setlength{\labelsep}{3pt}
\setlength{\parsep}{0pt}
\setlength{\itemsep}{0pt}
\setlength{\leftmargin}{0pt}
\setlength{\labelwidth}{0pt}
\setlength{\listparindent}{\parindent}
\setlength{\itemsep}{0pt}
\setlength{\itemindent}{0pt}
\topsep=3pt plus 1pt minus 1 pt}}
\renewcommand{\epsilon}{\ensuremath{\varepsilon}}
\renewcommand{\phi}{\ensuremath{\varphi}}
\newcommand{\vide}{\ensuremath{\varnothing}}
\renewcommand{\to}{\ensuremath{\longrightarrow}}
\renewcommand{\mapsto}{\ensuremath{\longmapsto}}
\newcommand{\N}{\ensuremath{\mathbb N}}
\newcommand{\Z}{\ensuremath{\mathbb Z}}
\newcommand{\dt}{\ensuremath{\mathbb D}^{2}}
\newcommand{\St}[1][2]{\ensuremath{\mathbb S}^{#1}}
\newcommand{\FF}{\ensuremath{\mathbb F}}
\newcommand{\F}[1][n]{\ensuremath{\FF_{{#1}}}}
\newcommand{\rp}{\ensuremath{\mathbb{R}P^2}}
\newcommand{\sn}[1][n]{\ensuremath{S_{{#1}}}}
\DeclareRobustCommand*{\up}[1]{\ensuremath{\textsuperscript{#1}}}
\newcommand{\ft}[1][n]{\ensuremath{\Delta_{#1}^{2}}}
\renewcommand{\ker}[1]{\ensuremath{\operatorname{\text{Ker}}\left({#1}\right)}}
\newcommand{\im}[1]{\ensuremath{\operatorname{\text{Im}}\left({#1}\right)}}
\newcommand{\aut}[1]{\ensuremath{\operatorname{\text{Aut}}\left({#1}\right)}}
\newcommand{\id}{\ensuremath{\operatorname{\text{Id}}}}
\newcommand{\hooklongrightarrow}{\lhook\joinrel\longrightarrow}
\newcommand{\quat}[1][8]{\ensuremath{\mathcal{Q}_{#1}}}
\newcommand{\cd}[1]{\ensuremath{\operatorname{\text{cd}}(#1)}}
\def\@map#1#2[#3]{\mbox{$#1 \colon\thinspace #2 \to #3$}}
\def\map#1#2{\@ifnextchar [{\@map{#1}{#2}}{\@map{#1}{#2}[#2]}}
\newcommand{\brak}[1]{\ensuremath{\left\{ #1 \right\}}}
\newcommand{\ang}[1]{\ensuremath{\left\langle #1\right\rangle}}
\newcommand{\setangr}[2]{\ensuremath{\ang{#1 \,\left\lvert \, #2 \right.}}}
\newcommand{\setangl}[2]{\ensuremath{\ang{\left. #1 \,\right\rvert \, #2}}}
\newcommand{\lhra}{\lhook\joinrel\longrightarrow}
\newcommand{\setr}[2]{\ensuremath{\brak{#1 \,\left\lvert \, #2 \right.}}}
\newcommand{\setl}[2]{\ensuremath{\brak{\left. #1 \,\right\rvert \, #2}}}
\newtheoremstyle{theoremm}{}{}{\itshape}{}{\scshape}{.}{ }{}
\theoremstyle{theoremm}
\newtheorem{thm}{Theorem}
\newtheorem{prop}[thm]{Proposition}
\newtheorem{cor}[thm]{Corollary}
\newtheoremstyle{remarkk}{}{}{}{}{\scshape}{.}{ }{}
\theoremstyle{remarkk}
\newtheorem{rem}[thm]{Remark}
\newtheorem{rems}[thm]{Remarks}
\newtheoremstyle{comment}{}{}{\bfseries}{}{\bfseries}{:}{ }{}
\theoremstyle{comment}
\newcommand{\reth}[1]{Theorem~\protect\ref{th:#1}}
\newcommand{\repr}[1]{Proposition~\protect\ref{prop:#1}}
\newcommand{\reco}[1]{Corollary~\protect\ref{cor:#1}}
\newcommand{\resec}[1]{Section~\protect\ref{sec:#1}}
\newcommand{\rerem}[1]{Remark~\protect\ref{rem:#1}}
\newcommand{\rerems}[1]{Remarks~\protect\ref{rem:#1}}
\newcommand{\req}[1]{equation~(\protect\ref{eq:#1})}
\newcommand{\reqref}[1]{(\protect\ref{eq:#1})}
\newcommand{\rpminus}[1][n-2]{\ensuremath{P_{#1}(\rp\,\setminus\brak{x_{1},x_{2}})}}
\newcommand{\keromega}[2][n]{\ensuremath{\ker{\widehat{\iota}_{#2}\!\left\lvert_{\Omega_{#1}}\right.\!}}}
\begin{document}

\title{
The inclusion of configuration spaces of surfaces in Cartesian products, its induced homomorphism, and the virtual cohomological dimension of the braid groups of $\St$ and $\rp$}


\author{DACIBERG~LIMA~GON\c{C}ALVES\\
Departamento de Matem\'atica - IME-USP,\\
Caixa Postal~66281~-~Ag.~Cidade de S\~ao Paulo,\\ 
CEP:~05314-970 - S\~ao Paulo - SP - Brazil.\\
e-mail:~\url{dlgoncal@ime.usp.br}\vspace*{4mm}\\
JOHN~GUASCHI\\
Normandie Universit\'e, UNICAEN,\\
Laboratoire de Math\'ematiques Nicolas Oresme UMR CNRS~\textup{6139},\\
CS 14032, 14032 Caen Cedex 5, France.\\
e-mail:~\url{john.guaschi@unicaen.fr}}

\date{\today}

\begingroup
\renewcommand{\thefootnote}{}
\footnotetext{2010 AMS Subject Classification: 20F36 (primary); 20J06 (secondary).}
\endgroup 

\maketitle

\begin{abstract}
\noindent \emph{Let $S$ be a surface, perhaps with boundary, and either compact, or with a finite number of points removed from the interior of the surface. We consider the inclusion $\map{\iota}{F_{n}(S)}[\prod_{1}^{n}\, S]$ of the $n\up{th}$ configuration space $F_{n}(S)$ of $S$ into the $n$-fold Cartesian product of $S$, as well as the induced homomorphism $\map{\iota_{\#}}{P_{n}(S)}[\prod_{1}^{n}\, \pi_{1}(S)]$, where $P_{n}(S)$ is the $n$-string pure braid group of $S$. Both $\iota$ and $\iota_{\#}$ were studied initially by J.~Birman who conjectured that $\ker{\iota_{\#}}$ is equal to the normal closure of the Artin pure braid group $P_{n}$ in $P_{n}(S)$. The conjecture was later proved by C.~Goldberg for compact surfaces without boundary different from the $2$-sphere $\St$ and the projective plane $\rp$. In this paper, we prove the conjecture for $\St$ and $\rp$. In the case of $\rp$, we prove that $\ker{\iota_{\#}}$ is equal to the commutator subgroup of $P_{n}(\rp)$, we show that it may be decomposed in a manner similar to that of $P_{n}(\St)$ as a direct sum of a torsion-free subgroup $L_{n}$ and the finite cyclic group generated by the full twist braid, and we prove that $L_{n}$ may be written as an iterated semi-direct product of free groups. Finally, we show that the groups $B_n(\St)$ and $P_n(\St)$ (resp.\ $B_n(\rp)$ and $P_n(\rp)$) have finite virtual cohomological dimension equal to $n-3$ (resp.\ $n-2$), where $B_{n}(S)$ denotes the full $n$-string braid group of $S$. This allows us to determine the virtual cohomological dimension of the mapping class groups of the mapping class groups of $\St$ and $\rp$ with marked points, which in the case of $\St$, reproves a result due to J.~Harer.}
\end{abstract}

\section{Introduction}\label{sec:intro}

Let $S$ be a connected surface, perhaps with boundary, and either compact, or with a finite number of points removed from the interior of the surface. 
The \emph{$n\up{th}$ configuration space} of $S$ is defined by:
\begin{equation*}
F_n(S)=\setr{(x_{1},\ldots,x_{n})\in S^{n}}{\text{$x_{i}\neq x_{j}$ if $i\neq j$}}.
\end{equation*}
It is well known that $\pi_1(F_n(S))\cong P_n(S)$, the \emph{pure braid group} of $S$ on $n$ strings, and that $\pi_1(F_n(S)/\sn)\cong B_n(S)$, the \emph{braid group} of $S$ on $n$ strings, where $F_n(S)/\sn$ is the quotient space of $F_{n}(S)$ by the free action of the symmetric group $\sn$ given by permuting coordinates~\cite{FaN,FoN}. If $S$ is the $2$-disc $\dt$ then $B_{n}(\dt)$ (resp.\ $P_{n}(\dt)$) is the Artin braid group $B_{n}$ (resp.\ the Artin pure braid group $P_{n}$). The canonical projection $F_n(S)\to F_n(S)/\sn$ is a regular $n!$-fold covering map, and thus gives rise to the following short exact sequence:
\begin{equation}\label{eq:sessym}
1\to P_n(S) \to B_n(S) \to \sn\to 1.
\end{equation}
If $\dt$ is a topological disc lying in the interior of $S$ and that contains the basepoints of the braids then the inclusion $\map{j}{\dt}[S]$ induces a group homomorphism $\map{j_{\#}}{B_{n}}[B_{n}(S)]$. This homomorphism is injective if $S$ is different from the $2$-sphere $\St$ and the real projective plane $\rp$~\cite{Bi,Gol}. Let $\map{j_{\#}\left\lvert_{P_{n}}\right.}{P_{n}}[P_{n}(S)]$ denote the restriction of $j_{\#}$ to the corresponding pure braid groups. If $\beta\in B_{n}$ then we shall denote its image $j_{\#}(\beta)$ in $B_{n}(S)$ simply by $\beta$. It is well known that the centre of $B_{n}$ and of $P_{n}$ is infinite cyclic, generated by the full twist braid that we denote by $\ft$, and that $\ft$, considered as an element of $B_{n}(\St)$ or of $B_{n}(\rp)$, is of order $2$ and generates the centre. If $G$ is a group then we denote its commutator subgroup by $\Gamma_{2}(G)$, its Abelianisation by $G\up{Ab}$, and if $H$ is a subgroup of $G$ then we denote its normal closure in $G$ by $\ang{\!\ang{H}\!}_{G}$.

Let $\prod_1^n\, S=S\times \cdots \times S$ denote the $n$-fold Cartesian product of $S$ with itself, let $\map{\iota_{n}}{F_n(S)}[\prod_1^n\, S]$ be the inclusion map, and let $\map{\iota_{n\#}}{\pi_1(F_n(S))}[\pi_1\left( \prod_1^n\, S\right)]$ denote the induced homomorphism on the level of fundamental groups. To simplify the notation, we shall often just write $\iota$ and $\iota_{\#}$ if $n$ is given. The study of $\iota_{\#}$ was initiated by Birman in 1969~\cite{Bi}. She had conjectured that $\ang{\!\!\ang{\,\im{j_{\#}\left\lvert_{P_{n}}\right.\!}}\!\!}_{P_{n}(S)}= \ker{\iota_{\#}}$ if $S$ is a compact orientable surface, but states without proof that her conjecture is false if $S$ is of genus greater than or equal to $1$~\cite[page~45]{Bi}. However, Goldberg proved the conjecture several years later in both the orientable and non-orientable cases for compact surfaces without boundary different from $\St$ and $\rp$~\cite[Theorem~1]{Gol}. In connection with the study of Vassiliev invariants of surface braid groups, González-Meneses and Paris showed that $\ker{\iota_{\#}}$ is also normal in $B_n(S)$, and that the resulting quotient is isomorphic to the semi-direct product $\pi_1\left(\prod_1^n\, S\right)\rtimes \sn$, where the action is given by permuting coordinates (their work was within the framework of compact, orientable surfaces without boundary, but their construction is valid for any surface $S$)~\cite{GMP}. In the case of $\rp$, this result was reproved using geometric methods~\cite{T}.


If $S=\St$, $\ker{\iota_{\#}}$ is clearly equal to $P_n(\St)$, and so by~\cite[Theorem~4]{GG2}, it may be decomposed as: 
\begin{equation}\label{eq:pns2sum}
\ker{\iota_{\#}}=P_n(\St)\cong P_{n-3}(\St\setminus \brak{x_1,x_2,x_3})\times \Z_2,
\end{equation}
where the first factor of the direct product is torsion free, and the $\Z_{2}$-factor is generated by $\ft$. 

The aim of this paper is to resolve Birman's conjecture for surfaces without boundary in the remaining cases, namely $S=\St$ or $\rp$, to determine the cohomological dimension of $B_{n}(S)$ and $P_{n}(S)$, where $S$ is one of these two surfaces, and to elucidate the structure of $\ker{\iota_{\#}}$ in the case of $\rp$.
In \resec{kerrp2}, we start by considering the case $S=\rp$, we study $\ker{\iota_{\#}}$, which we denote by $K_{n}$, and we show that it admits a decomposition similar to that of \req{pns2sum}.


\begin{prop}\label{prop:prop3}
Let $n\in \N$.
\begin{enumerate}[(a)]
\item\label{it:prop3a} 
\begin{enumerate}[(i)]
\item\label{it:prop3ai} Up to isomorphism, the homomorphism $\map{\iota_{\#}}{\pi_1( F_n(\rp))} [\pi_1(\Pi_{1}^{n}( \rp))]$ coincides with Abelianisation. In particular, $K_{n}=\Gamma_{2}(P_{n}(\rp))$.
\item\label{it:prop3aii} If $n\geq 2$ then there exists a torsion-free subgroup $L_{n}$ of $K_{n}$ such that $K_{n}$ is isomorphic to the direct sum of $L_{n}$ and the subgroup $\ang{\ft}$ generated by the full twist that is isomorphic to $\Z_{2}$.
\end{enumerate}
\item\label{it:prop3b} If $n\geq 2$ then any subgroup of $P_n(\rp)$ that is normal in $B_n(\rp)$ and that properly contains $K_{n}$ possesses an element of order $4$.
\end{enumerate}
\end{prop}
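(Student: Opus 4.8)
The plan for part (a)(i) is to reduce $\iota_{\#}$ to Abelianisation. Since $\pi_{1}\!\left(\prod_{1}^{n}\rp\right)\cong(\Z_{2})^{n}$ is abelian, $\iota_{\#}$ factors through $P_{n}(\rp)\up{Ab}$. From the standard presentation of $P_{n}(\rp)$, with the ``disc'' generators $A_{i,j}$ and the ``cross-cap'' generators $\rho_{1},\ldots,\rho_{n}$ (here $\rho_{i}$ is the pure braid in which only the $i$-th string runs once along a fixed generating loop of $\rp$), one checks that Abelianisation kills every $A_{i,j}$ and imposes $\rho_{i}^{2}=1$, so $P_{n}(\rp)\up{Ab}\cong(\Z_{2})^{n}$ with basis the classes of the $\rho_{i}$. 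On the other hand $\iota_{\#}(A_{i,j})=0$ (in the Cartesian product the $i$-th coordinate traces a null-homotopic loop about the fixed $j$-th point), while $\iota_{\#}(\rho_{i})$ generates the $i$-th factor $\pi_{1}(\rp)\cong\Z_{2}$. Hence $\iota_{\#}$ is onto, so the induced map $P_{n}(\rp)\up{Ab}\to(\Z_{2})^{n}$ is an isomorphism of finite groups, and up to it $\iota_{\#}$ is Abelianisation; in particular $K_{n}=\ker{\iota_{\#}}=\Gamma_{2}(P_{n}(\rp))$. I also record that $\ft\in K_{n}$: via $j_{\#}$ the full twist is a product of the $A_{i,j}$, so $\iota_{\#}(\ft)=0$; and since $\ft$ is central of order $2$ in $B_{n}(\rp)$, the subgroup $\ang{\ft}\cong\Z_{2}$ is central in $K_{n}$.

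For part (a)(ii), let $\pi\colon P_{n}(\rp)\to P_{2}(\rp)$ be induced by the Fadell--Neuwirth fibration $F_{n}(\rp)\to F_{2}(\rp)$ that forgets the last $n-2$ strings. Its fibre $F_{n-2}(\rp\setminus\brak{x_{1},x_{2}})$ is a configuration space of an aspherical open surface, hence aspherical with torsion-free fundamental group, and (using $\pi_{2}(F_{2}(\rp))=0$) that fibre group is precisely $\ker{\pi}$. Set $L_{n}=K_{n}\cap\ker{\pi}$; it is torsion free. As $\pi$ is surjective, $\pi(K_{n})=\pi(\Gamma_{2}(P_{n}(\rp)))=\Gamma_{2}(P_{2}(\rp))$, and $P_{2}(\rp)\cong\quat$ has $\Gamma_{2}(\quat)\cong\Z_{2}$ generated by its central involution, which is $\ft[2]$. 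Since $\ft=\ft[n]$ maps under $\pi$ to $\ft[2]$ (forgetting strings from a full twist gives a full twist), $\ft\notin L_{n}$ and $\ang{\ft}$ maps isomorphically onto $K_{n}/L_{n}\cong\Z_{2}$; being central of order $2$ it splits $L_{n}$ off, giving $K_{n}=L_{n}\times\ang{\ft}$ with $L_{n}$ torsion free (for $n=2$ one has $L_{2}=\brak{1}$ and $K_{2}\cong\Z_{2}$). The subsequent claim that $L_{n}$ is an iterated semi-direct product of free groups will follow from the Fadell--Neuwirth tower of the $(\Z_{2})^{n}$-cover $\set{(\widetilde{x}_{1},\ldots,\widetilde{x}_{n})\in(\St)^{n}}{\widetilde{x}_{i}\neq\pm\widetilde{x}_{j}\text{ for }i\neq j}$ of $F_{n}(\rp)$ corresponding to $K_{n}$.

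For part (b), suppose $K_{n}\subsetneq H\le P_{n}(\rp)$ with $H$ normal in $B_{n}(\rp)$. Then $\overline{H}:=H/K_{n}$ is a non-zero submodule of $P_{n}(\rp)/K_{n}\cong(\Z_{2})^{n}$ under the $\sn$-action permuting coordinates (conjugation by $B_{n}(\rp)$ on $P_{n}(\rp)\up{Ab}$ factors through $\sn$). By the classification of submodules of this permutation module, either $\overline{H}$ contains the all-ones vector $(1,\ldots,1)$, or $\overline{H}$ contains the ``sum-zero'' submodule $V_{0}=\set{v}{\sum v_{i}=0}$. Since $K_{n}\subseteq H$, the whole $K_{n}$-coset above any vector of $\overline{H}$ lies in $H$, so it suffices to exhibit in $H$ an element of order $4$ with the prescribed image in $(\Z_{2})^{n}$. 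In the first case, $\rho_{1}\rho_{2}\cdots\rho_{n}$ has image $(1,\ldots,1)$ and satisfies $(\rho_{1}\cdots\rho_{n})^{2}=\ft$ (a relation read off from the presentation of $P_{n}(\rp)$), hence has order $4$ and lies in $H$. In the second case I would use that $P_{n}(\rp)$ contains a subgroup isomorphic to $\quat$: its three elements of order $4$ have images $u,v,u+v$ in $(\Z_{2})^{n}$ that are distinct and non-zero (an order-$4$ element cannot lie in $K_{n}=L_{n}\times\ang{\ft}$, whose non-trivial elements have order $2$ or infinite order), and $u,v,u+v$ cannot all be of odd weight, so one of these order-$4$ elements has image of even weight, i.e.\ lying in $V_{0}\subseteq\overline{H}$, whence it lies in $H$. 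Either way $H$ has an element of order $4$.

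The routine part is (a)(i)--(a)(ii). The main obstacle is the torsion input in part (b): establishing that $P_{n}(\rp)$ contains a copy of $\quat$ for every $n\ge 2$, and the relation $(\rho_{1}\cdots\rho_{n})^{2}=\ft$ exhibiting an order-$4$ element of image $(1,\ldots,1)$ — both requiring careful work with a presentation of $P_{n}(\rp)$; the subcase $\overline{H}=\ang{(1,\ldots,1)}$ with $n$ odd, handled only by this last element, is the most delicate. Everything ultimately rests on the torsion-freeness of the Fadell--Neuwirth fibre groups, i.e.\ on $\pi_{2}(F_{k}(\rp))=0$ for $k\ge 2$.
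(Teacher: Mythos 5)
Parts (a)(i) and (a)(ii) of your proposal are correct and follow essentially the paper's own route: Abelianisation read off from the presentation of $P_{n}(\rp)$, then the Fadell--Neuwirth projection onto $P_{2}(\rp)$, with $L_{n}$ the kernel of its restriction to $K_{n}$ and the splitting given by $\ft[2]\mapsto\ft$. For part (b) you take a genuinely different, and in principle cleaner, reduction than the paper: instead of explicitly manufacturing an element $z$ with $\iota_{\#}(z)\in\brak{(\overline{1},\ldots,\overline{1}),(\overline{1},\ldots,\overline{1},\overline{0})}$ by conjugating by powers of $a$ and $b$ (which is what the paper does), you observe that $H/K_{n}$ is an $\sn$-submodule of $\Z_{2}^{n}$, hence contains the all-ones vector or the sum-zero submodule $V_{0}$, and then seek an order-$4$ element with prescribed image. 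Two caveats on the torsion input. First, $(\rho_{1}\cdots\rho_{n})^{2}=\ft$ is true, but it cannot be ``read off from the presentation'': it is equivalent to the non-trivial facts the paper cites, namely that $a^{n}=\rho_{n}\cdots\rho_{1}$ has order $4$ and that conjugation by $a^{-1}$ cyclically permutes $\rho_{1},\ldots,\rho_{n},\rho_{1}^{-1},\ldots,\rho_{n}^{-1}$, from which $a^{-n}(\rho_{1}\cdots\rho_{n})a^{n}=\rho_{1}^{-1}\cdots\rho_{n}^{-1}=a^{-n}$ and hence $\rho_{1}\cdots\rho_{n}=a^{-n}$; so this step is a citation, not a computation from the relations.

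The genuine gap is your $V_{0}$ case: the tool you invoke, that $P_{n}(\rp)$ contains a copy of $\quat$ for every $n\geq 2$, is false for $n\geq 4$ (and unproved in your text for $n=3$). Indeed, if $x,y$ generated such a copy, then $x$, $y$ and $xy$ would have order $4$, hence each would be conjugate in $B_{n}(\rp)$ to $a^{n}$ or $b^{n-1}$, so (conjugation permuting coordinates of the image) their images in $\Z_{2}^{n}$ would have weight $n$ or $n-1$; but these three images are distinct, non-zero, and each is the sum of the other two, forcing the weight patterns $(n,n,0)$, $(n,n-1,1)$ or $(n-1,n-1,2)$, none of which can occur once $n\geq 4$. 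So, as written, the case $V_{0}\subseteq H/K_{n}$ is unsupported. The repair is immediate and uses only what you already need elsewhere: of the two order-$4$ elements $a^{n}$ (image of weight $n$) and $b^{n-1}$ (image of weight $n-1$), one always has image of even weight, i.e.\ lying in $V_{0}\subseteq H/K_{n}$, and therefore lies in $H$. With that substitution your module-theoretic argument for (b) is complete and is a genuine alternative to the paper's explicit construction.
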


Note that if $n=1$ then $B_{1}(\rp)=P_{1}(\rp)\cong \Z_{2}$ and $\ft[1]$ is the trivial element, so parts~(\ref{it:prop3a})(\ref{it:prop3aii}) and~(\ref{it:prop3b}) do not hold. Part~(\ref{it:prop3a})(\ref{it:prop3ai}) will be proved in \repr{abeliota}. We shall see later on in \rerem{Lnuniqueness} that there are precisely $2^{n(n-2)}$ subgroups that satisfy the conclusions of part~(\ref{it:prop3a})(\ref{it:prop3aii}), and to prove the statement, we shall exhibit an explicit torsion-free subgroup $L_{n}$. We then prove Birman's conjecture for $\St$ and $\rp$, using \repr{prop3}(\ref{it:prop3a})(\ref{it:prop3ai}) in the case of $\rp$.

\begin{thm}\label{th:birman}
Let $S$ be one of $\St$ or $\rp$, and let $n\geq 1$. Then $\ang{\!\ang{\,\im{j_{\#}\left\lvert_{P_{n}}\right.\!}}\!}_{P_{n}(S)}= \ker{\iota_{\#}}$.
\end{thm}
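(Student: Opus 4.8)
The plan is to treat the two surfaces separately, since the mechanisms are different, and to isolate in each case why the kernel of $\iota_{\#}$ coincides with the normal closure $N_{n}:=\ang{\!\ang{\,\im{j_{\#}\left\lvert_{P_{n}}\right.\!}}\!}_{P_{n}(S)}$. In both cases the inclusion $N_{n}\subseteq \ker{\iota_{\#}}$ is immediate: the composite $P_{n}\xrightarrow{j_{\#}} P_{n}(S)\xrightarrow{\iota_{\#}} \pi_1(\prod_1^n S)$ factors through $\pi_{1}(\prod_{1}^{n}\dt)=1$, so $\im{j_{\#}\left\lvert_{P_{n}}\right.}$ lies in $\ker{\iota_{\#}}$, and the latter is normal in $P_{n}(S)$ (indeed in $B_{n}(S)$, by González-Meneses--Paris), whence $N_{n}\subseteq\ker{\iota_{\#}}$. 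Everything therefore reduces to the reverse inclusion.

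For $S=\St$ the target $\pi_{1}(\prod_{1}^{n}\St)$ is trivial, so $\ker{\iota_{\#}}=P_{n}(\St)$ and the claim is that $N_{n}=P_{n}(\St)$, i.e.\ that the normal closure of $\im{j_{\#}\left\lvert_{P_{n}}\right.}$ is everything. I would argue this from the standard presentation of $P_{n}(\St)$: it is generated by the Artin-type generators $A_{i,j}$ together with the single relation coming from the sphere (the product of the full set of ``band'' generators around a point being trivial), and all the $A_{i,j}$ already lie in $\im{j_{\#}\left\lvert_{P_{n}}\right.}$ — equivalently, $P_{n}(\St)$ is a quotient of $P_{n}$, so $j_{\#}\left\lvert_{P_{n}}\right.$ is surjective and $N_{n}=P_{n}(\St)$. (For $n=1,2,3$ one checks the small cases by hand, using $P_{1}(\St)=P_{2}(\St)=1$ and $P_{3}(\St)\cong\Z_{2}$, noting that in each case $j_{\#}\left\lvert_{P_{n}}\right.$ remains onto.) This half is essentially bookkeeping.

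The substantive case is $S=\rp$. Here I would invoke \repr{prop3}(\ref{it:prop3a})(\ref{it:prop3ai}), which identifies $\iota_{\#}$ up to isomorphism with the Abelianisation map $P_{n}(\rp)\to P_{n}(\rp)\up{Ab}$, so that $\ker{\iota_{\#}}=\Gamma_{2}(P_{n}(\rp))$. Thus the goal becomes: $N_{n}=\Gamma_{2}(P_{n}(\rp))$. The inclusion $N_{n}\subseteq\Gamma_{2}(P_{n}(\rp))$ follows from the above together with the fact that $\Gamma_{2}(P_{n}(\rp))=\ker{\iota_{\#}}$; for the reverse inclusion I would compute the Abelianisation of the quotient $P_{n}(\rp)/N_{n}$ and show it is all of $P_{n}(\rp)\up{Ab}$, i.e.\ that $N_{n}$ contains no ``Abelianisation mass''. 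Concretely, using a presentation of $P_{n}(\rp)$ (e.g.\ the one with generators $A_{i,j}$ and $\rho_{k}$, the latter coming from the non-trivial loops of $\rp$), the Artin pure braid generators $A_{i,j}$ are precisely the $j_{\#}$-images, and one checks that killing the normal closure of all the $A_{i,j}$ collapses $P_{n}(\rp)$ onto the free abelian (or elementary abelian) group generated by the images of the $\rho_{k}$ — which is exactly $P_{n}(\rp)\up{Ab}$ modulo nothing further. Equivalently: $P_{n}(\rp)/N_{n}$ is abelian (all commutators of generators become trivial once the $A_{i,j}$ are killed, since the only non-commuting relations involve them), hence $\Gamma_{2}(P_{n}(\rp))\subseteq N_{n}$.

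The main obstacle I anticipate is the last step for $\rp$: verifying that $P_{n}(\rp)/N_{n}$ is abelian and has the right size requires a careful presentation of $P_{n}(\rp)$ and a check that no relation, once the $A_{i,j}$ are trivialised, forces extra collapsing among the $\rho_{k}$ or, conversely, fails to trivialise some commutator of $\rho_{k}$'s — in $\rp$ the torsion and the sphere-type relations make this delicate, and one must be sure the images of the $\rho_{k}$ generate a group of exactly the order of $P_{n}(\rp)\up{Ab}$. A clean way to bypass part of this is to use \repr{prop3}(\ref{it:prop3b}): any subgroup of $P_{n}(\rp)$ normal in $B_{n}(\rp)$ that properly contains $K_{n}=\ker{\iota_{\#}}$ has an element of order $4$; since $N_{n}\subseteq K_{n}$ and one can check $N_{n}$ has no element of order $4$ (its elements coming from torsion-free $P_{n}$ and the sphere relations), combined with the maximality/normality this pins $N_{n}$ between nothing and $K_{n}$ and, with the Abelianisation computation giving $N_{n}\supseteq\Gamma_{2}(P_{n}(\rp))=K_{n}$, yields equality. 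I would present the direct presentation-based argument as the primary proof and mention the $\repr{prop3}(\ref{it:prop3b})$ route as a consistency check.
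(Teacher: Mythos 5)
Your proposal is correct and follows essentially the same route as the paper: for $\St$ you use that $j_{\#}\left\lvert_{P_{n}}\right.$ is surjective because the $A_{i,j}$ generate $P_{n}(\St)$, and for $\rp$ you use \repr{abeliota} to identify $\ker{\iota_{\#}}$ with $\Gamma_{2}(P_{n}(\rp))$ and then check, via the relations of \reth{basicpres}, that the quotient by the normal closure of the $A_{i,j}$ is abelian, which is exactly the paper's commutator computation phrased in the quotient rather than in the subgroup (and no count of the size of $P_{n}(\rp)\up{Ab}$ is needed, since abelianness of the quotient alone gives $\Gamma_{2}(P_{n}(\rp))\subseteq \ang{\!\ang{\,\im{j_{\#}\left\lvert_{P_{n}}\right.\!}}\!}_{P_{n}(\rp)}$). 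Your parenthetical alternative via \repr{prop3}(\ref{it:prop3b}) would not by itself yield the reverse inclusion, but since you present the direct presentation-based argument as the primary proof, this does not affect the correctness of the proposal.
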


In \resec{propsLn}, we analyse $L_{n}$ in more detail, and we show that it may be decomposed as an iterated semi-direct product of free groups. 

\begin{thm}\label{th:th4}
Let $n\geq 3$. Consider the Fadell-Neuwirth short exact sequence:
\begin{equation}\label{eq:fnpnp2}
1\to \rpminus \to P_{n}(\rp) \stackrel{q_{2\#}}{\to} P_{2}(\rp) \to 1,
\end{equation}
where $q_{2\#}$ is given geometrically by forgetting the last $n-2$ strings. Then $L_{n}$ may be identified with the kernel of the composition
\begin{equation*}
\rpminus \to P_n(\rp) \stackrel{\iota_{\#}}{\to} \underbrace{\Z_2 \times\cdots\times \Z_2}_{\text{$n$ copies}},
\end{equation*}
where the first homomorphism is that appearing in \req{fnpnp2}. The image of this composition is the product of the last $n-2$ copies of $\Z_2$. In particular, $L_{n}$ is of index $2^{n-2}$ in $\rpminus$. Further, $L_{n}$ is isomorphic to an iterated semi-direct product of free groups of the form $\F[2n-3]\rtimes (\F[2n-5]\rtimes (\cdots \rtimes(\F[5]\rtimes \F[3])\cdots))$, where for all $m\in \N$, $\F[m]$ denotes the free group of rank $m$.
\end{thm}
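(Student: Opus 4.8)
The plan is to analyse the restriction of $\iota_{\#}$ to the free-group factor $\rpminus$ of the Fadell--Neuwirth sequence~\reqref{fnpnp2}, and then to peel off strings one at a time. First I would recall from \repr{prop3}(\ref{it:prop3a})(\ref{it:prop3ai}) that $\iota_{\#}$ coincides with Abelianisation, so that the composition $\rpminus \to P_n(\rp)\stackrel{\iota_{\#}}{\to}\Z_2^{\,n}$ is simply the map sending each standard generator to its class in $P_n(\rp)\up{Ab}\cong \Z_2^{\,n}$. The generators of $\rpminus$ are, up to conjugacy, the generators $\rho_{i}$ (one for each $\rp$-factor) and the ``sphere-type'' generators $A_{i,j}$ for $3\le i<j\le n$; the former map onto the last $n-2$ of the $\Z_2$-summands (those indexed by the $n-2$ strings that are forgotten are exactly the $\rho_i$ with $i\ge 3$) while the $A_{i,j}$ lie in $\Gamma_2(P_n(\rp))$ and hence map to $0$. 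This identifies the image of the composition as the product of the last $n-2$ copies of $\Z_2$, so that $L_n=\keromega{\#}$, where for brevity I write $\iota_{\#}$ for the composite, is of index $2^{n-2}$ in $\rpminus$; this also forces the identification of $L_n$ (defined abstractly in \repr{prop3}(\ref{it:prop3a})(\ref{it:prop3aii})) with this concrete kernel, since both are torsion-free of the same index in a common group and both complement $\ang{\ft}$ inside $K_n$.

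The heart of the argument is the iterated-fibration description. For each $k$ with $3\le k\le n$, the Fadell--Neuwirth projection that forgets the last string gives a split short exact sequence
\begin{equation*}
1\to \pi_1(\rp\setminus\brak{x_1,\ldots,x_{k-1}})\to P_{k}(\rp)\setminus(\text{first two strings})\to P_{k-1}(\rp)\setminus(\text{first two strings})\to 1,
\end{equation*}
where the fibre is a free group of rank $k-1$, since $\rp$ with $k-1\ge 2$ points removed is an open surface of Euler characteristic $1-(k-1)=2-k$, hence homotopy equivalent to a wedge of $k-1$ circles. Splitting comes from the usual section adding a string near the boundary. Restricting these exact sequences to the kernel of $\iota_{\#}$, and using that the homomorphisms involved are compatible with forgetting strings and with Abelianisation, I would show that $L_n$ inherits an iterated semidirect-product structure whose successive quotients are the kernels of the restriction of $\iota_{\#}$ to each free fibre $\pi_1(\rp\setminus\brak{x_1,\ldots,x_{k-1}})\cong \F[k-1]$. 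The last step is a rank count: the restriction of Abelianisation to this $\F[k-1]$ is surjective onto a single $\Z_2$ (the summand indexed by the new string), so its kernel is a subgroup of index $2$ in a free group of rank $k-1$, hence free of rank $2(k-1)-1=2k-3$ by the Nielsen--Schreier formula. Running $k$ from $3$ to $n$ yields free factors of ranks $3,5,7,\ldots,2n-3$, giving the claimed decomposition $L_n\cong \F[2n-3]\rtimes(\F[2n-5]\rtimes(\cdots\rtimes(\F[5]\rtimes\F[3])\cdots))$.

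The main obstacle I anticipate is not the rank bookkeeping but the verification that the restriction of $\iota_{\#}$ to each free fibre is \emph{onto} exactly one $\Z_2$-summand and \emph{trivial} on the remaining coordinates, equivalently that the generator $\rho_k$ attached to the $k$\up{th} string is primitive in the Abelianisation while all the corresponding $A_{i,k}$ die there. This requires pinning down an explicit presentation of $P_n(\rp)$ (for instance via the generators $A_{i,j}$, $\rho_i$ and the surface relations) and tracking the Abelianisation through each Fadell--Neuwirth splitting; one must also check that the sections can be chosen to land inside $\ker{\iota_{\#}}$, or at least that the induced extension of kernels still splits, so that the semidirect-product form genuinely descends to $L_n$. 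Once the behaviour of $\iota_{\#}$ on generators is settled, the exactness of the restricted sequences and the Nielsen--Schreier rank computation are routine, and the statement about the image and the index $2^{n-2}$ follows immediately by multiplying the indices $2$ contributed at each of the $n-2$ stages.
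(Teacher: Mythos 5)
Your route is essentially the paper's: identify $L_{n}$ with the kernel of the restriction of $\iota_{\#}$ to $\rpminus$, read off on generators that the image is the last $n-2$ copies of $\Z_{2}$ (so the index is $2^{n-2}$), then decompose this kernel along the Fadell--Neuwirth tower for $\rp\setminus\brak{x_{1},x_{2}}$ and count ranks by Nielsen--Schreier ($3,5,\ldots,2n-3$). Two points, however, need to be addressed. First, your justification for identifying $L_{n}$ with the concrete kernel is invalid as stated: being torsion free, of the right index, and a complement of $\ang{\ft}$ in $K_{n}$ does not determine a subgroup --- by \rerem{Lnuniqueness} there are exactly $2^{n(n-2)}$ such complements. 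No uniqueness argument is needed anyway: by its construction in the proof of \repr{prop3}, $L_{n}=\ker{q_{2\#}\!\left\vert_{K_{n}}\right.}=K_{n}\cap \rpminus=\ker{\iota_{\#}}\cap\rpminus$, which is literally the kernel of the composite; this is how the paper argues, via the commutative diagram~\reqref{fnpnp3}.

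Second, and more seriously, the step you flag ("one must also check that the sections can be chosen to land inside $\ker{\iota_{\#}}$, or at least that the induced extension of kernels still splits") is precisely where the substance of the theorem lies, and your proposal does not carry it out. It is not automatic that a section of the Fadell--Neuwirth sequence~\reqref{fns} restricts to a section $L_{m+1}\to L_{m+2}$, nor even that the restricted projection $L_{m+2}\to L_{m+1}$ is surjective. The paper handles this with an explicit section $s_{m+1}$ (\req{secexplicit}, obtained geometrically by doubling the second string and conjugating), checking on the explicit generating set $\widehat{\mathcal{X}}_{m+1}$ of $L_{m+1}$ given in \req{genskeromega} that $s_{m+1}(x)\in L_{m+2}$ --- the key point being that $s_{m+1}(A_{i,j})$ is a product of $A$-type elements while $s_{m+1}(\rho_{j})=\rho_{j}A_{j,m+2}^{-1}$, so that $\widehat{\iota}_{m}\circ s_{m+1}$ agrees with $\widehat{\iota}_{m-1}$ followed by the inclusion $\Z_{2}^{m-1}\hookrightarrow \Z_{2}^{m}$ --- and proves surjectivity of $r_{m+1}\!\left\lvert_{L_{m+2}}\right.$ by rewriting an element of $L_{m+1}$ as a word in $\mathcal{X}_{m+1}$ and viewing that word in $\rpminus[m]$. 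Without these verifications (and the identification of the fibre kernels as free of rank $2l-1$ with the basis of \req{basiskeromega}, which your Nielsen--Schreier count does correctly predict), the iterated semi-direct product structure of $L_{n}$ is not established; the image and index statements alone do not yield it.
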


In the semi-direct product decomposition of $L_{n}$, note that every factor acts on each of the preceding factors. This is also the case for $\rpminus$ (see \req{semifn}), and as we shall see in \rerems{artin}(\ref{it:artin1}), this implies an Artin combing-type result for this group. Analysing these semi-direct products in more detail, we obtain the following results. 

\begin{prop}\label{prop:abelianL}
If $n\geq 3$ then:
\begin{enumerate}[(a)]
\item\label{it:abparta} $\bigl(\rpminus\bigr)\up{Ab}\cong \Z^{2(n-2)}$.
\item\label{it:abpartb} $(L_{n})\up{Ab}\cong \Z^{n(n-2)}$.
\end{enumerate}
\end{prop}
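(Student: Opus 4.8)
The plan is to compute both abelianisations by exploiting the iterated semi-direct product decompositions, together with the Fadell-Neuwirth fibration relating $\rpminus$ to $P_2(\rp)$. For part~(\ref{it:abparta}), I would start from the Fadell-Neuwirth short exact sequence obtained by forgetting all but the first two of the remaining $n-2$ strings,
\begin{equation*}
1\to P_{n-4}(\rp\setminus\brak{x_1,x_2,x_3,x_4}) \to \rpminus \to P_2(\rp\setminus\brak{x_1,x_2}) \to 1,
\end{equation*}
and more generally iterate the standard Fadell-Neuwirth fibrations to write $\rpminus$ as an iterated semi-direct product of free groups $\F[2]\rtimes(\F[3]\rtimes(\cdots))$ (the fibre over a configuration of $k$ points in $\rp\setminus\brak{x_1,x_2}$ is $\pi_1$ of $\rp$ minus $k+2$ points, which is free of rank $k+1$, except the bottom stage which is $P_1(\rp\setminus\brak{x_1,x_2})=\F[2]$). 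The key point is that in each of these semi-direct products the quotient acts trivially on the abelianisation of the normal subgroup: this can be checked by the same argument used in the surface-braid-group literature (the action is by "pushing a point around a loop", and on $H_1$ of a punctured surface the puncture classes are permuted trivially and the boundary class is killed by a relation), so that the five-term exact sequence in homology degenerates and $H_1$ is additive over the semi-direct product. Summing ranks $1+2+\cdots$ over the $n-2$ free stages gives rank $2(n-2)$, and one checks there is no torsion. I would phrase this carefully, perhaps by first proving a small lemma: if $1\to N\to G\to Q\to 1$ with $G\up{Ab}$ and $Q\up{Ab}$ understood and the conjugation action of $Q$ on $N\up{Ab}$ trivial, then $G\up{Ab}\cong N\up{Ab}\oplus Q\up{Ab}$.

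For part~(\ref{it:abpartb}), I would use \reth{th4}, which gives $L_n\cong \F[2n-3]\rtimes(\F[2n-5]\rtimes(\cdots\rtimes(\F[5]\rtimes\F[3])\cdots))$. Again, provided the action of each factor on the abelianisation of the product of the preceding ones is trivial, the small lemma above applies inductively and yields
\begin{equation*}
(L_n)\up{Ab}\cong \bigoplus_{j=3,\,5,\,\ldots,\,2n-3}\Z^{j}=\Z^{3+5+\cdots+(2n-3)}.
\end{equation*}
The arithmetic: the odd numbers $3,5,\ldots,2n-3$ are $n-2$ terms summing to $(n-2)\cdot\frac{3+(2n-3)}{2}=(n-2)\cdot n=n(n-2)$, which is exactly the claimed rank. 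Freeness of each factor ensures there is no torsion to worry about in the direct sum.

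The main obstacle, and the step deserving the most care, is verifying that the conjugation actions on the relevant first homology groups are trivial — both for the Fadell-Neuwirth fibres in part~(\ref{it:abparta}) and, crucially, for the semi-direct factors of $L_n$ in part~(\ref{it:abpartb}), where the factors $\F[2j-3]$ are not the full Fadell-Neuwirth fibres but index-$2$ subgroups of them (recall $L_n$ sits inside $\rpminus$ with index $2^{n-2}$, and \reth{th4} identifies it stage by stage). I would handle this by tracking explicit generators: the Fadell-Neuwirth section realising the semi-direct product structure can be chosen compatibly with the subgroup $L_n$, and one computes the action of a section generator on the free-group generators of the fibre, showing that in homology every generator is sent to itself (the "error terms" are commutators or lie in deeper stages, hence die in $H_1$). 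An alternative, cleaner route for part~(\ref{it:abpartb}) that avoids re-deriving these actions would be to combine part~(\ref{it:abparta}) with \repr{prop3}(\ref{it:prop3a})(\ref{it:prop3aii}), which gives $K_n\cong L_n\oplus\Z_2$ with $L_n$ of index $2^{n-2}$ in $\rpminus$, together with a transfer or direct Reidemeister-Schreier argument relating $(L_n)\up{Ab}$ to $(\rpminus)\up{Ab}$; but since the index is a $2$-power and the relevant quotient $\Z^{2(n-2)}$ is torsion-free, this requires knowing precisely which index-$2^{n-2}$ subgroup $L_n$ is, so I expect the generator-tracking argument via \reth{th4} to be the more robust path to write down in full.
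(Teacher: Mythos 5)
Your outline for part~(\ref{it:abpartb}) is essentially the paper's argument: one inducts on the decomposition of $L_{n}$ coming from \reth{th4}, verifies by explicit conjugation computations that the action on the first homology of each free factor is trivial, and sums $3+5+\cdots+(2n-3)=n(n-2)$. The delicate point you flag — the factors are index-two subgroups of the Fadell--Neuwirth fibres, and the conjugating elements $\rho_{j}$ enter $L_{n}$ only through $\rho_{j}^{2}$ and $\rho_{j}A_{i,j}\rho_{j}^{-1}$ — is exactly where the paper works hardest: it shows that conjugation by arbitrary elements of $\rpminus[n-1]$ descends to the relevant quotient of the factor, so that the action of $\rho_{j}^{2}$ can be computed as the square of the class of conjugation by $\rho_{j}$.

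Part~(\ref{it:abparta}) as written, however, fails: the quotient does \emph{not} act trivially on the abelianisation of the Fadell--Neuwirth fibres over $\rp$. By relations~(\ref{it:prej3}) and~(\ref{it:prej4}) of \repr{prej}, conjugation by $\rho_{i}$ sends $A_{i,m+2}$ to $\rho_{m+2}^{-1}C_{i,m+2}^{-1}\rho_{m+2}$, hence $[A_{i,m+2}]\mapsto -[A_{i,m+2}]$ in $H_{1}$ of the fibre (using \req{cln}), and sends $\rho_{m+2}$ to $C_{i,m+2}\rho_{m+2}$, i.e.\ $[\rho_{m+2}]\mapsto [A_{i,m+2}]+[\rho_{m+2}]$; the paper remarks explicitly (after \repr{prespinm}) that the induced action on the abelianisation of the kernel is trivial for $\St$ but \emph{not} for $\rp$, so your heuristic about puncture classes is the $\St$ picture, not the $\rp$ one. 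If the action were trivial, additivity of $H_{1}$ over the $n-2$ stages of \req{semifn}, whose ranks are $2,3,\ldots,n-1$, would give $\Z^{(n+1)(n-2)/2}$, not $\Z^{2(n-2)}$ — your sentence ``summing ranks $1+2+\cdots$ gives $2(n-2)$'' is not correct arithmetic, and the mismatch is precisely the sign that the hypothesis of your lemma is violated. The repair is to use the correct statement for split extensions, $(K\rtimes_{\phi}H)\up{Ab}\cong (K\up{Ab})_{H}\oplus H\up{Ab}$ with \emph{coinvariants} in place of $K\up{Ab}$, and then to compute the coinvariants of each fibre: the relations above kill the classes $[A_{j,m+2}]$ for $j\geq 3$ (so the potential $\Z_{2}$ created by $[A_{j,m+2}]\mapsto -[A_{j,m+2}]$ also disappears), and the surface relation~\reqref{surfaceaij} then eliminates one further generator, leaving exactly $\Z^{2}$, spanned by $[A_{1,m+2}]$ and $[\rho_{m+2}]$, at each of the $n-2$ stages; this is the paper's computation of $\Delta(\Omega_{n})\cong\Z^{2}$ and yields $\Z^{2(n-2)}$.
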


In two papers in preparation, we shall analyse the homotopy fibre of $\iota$, as well as the induced homomorphism $\iota_{\#}$ when $S=\St$ or $\rp$~\cite{GG10}, and when $S$ is a space form manifold of dimension different from two~\cite{GGG}. In the first of these papers, we shall also see that $L_{n}$ is closely related to the fundamental group of an orbit configuration space of the open cylinder.

In \resec{vcd}, we study the virtual cohomological dimension of the braid groups of $\St$ and $\rp$. Recall from~\cite[page~226]{Br} that if a group $\Gamma$ is virtually torsion-free then all finite index torsion-free subgroups of $\Gamma$ have the same cohomological dimension by Serre's theorem, and this dimension is defined to be the \emph{virtual cohomological dimension} of $\Gamma$. Using equations~\reqref{pns2sum} and~\reqref{fnpnp2}, we prove the following result, namely that if $S=\St$ or $\rp$, the groups $B_n(S)$ and $P_n(S)$ have finite virtual cohomological dimension, and we compute these dimensions.

\begin{thm}\label{th:prop12}\mbox{}
\begin{enumerate}[(a)]
\item\label{it:harer1} Let $n\geq 4$. Then the virtual cohomological dimension of both $B_n(\St)$ and $P_n(\St)$ is equal to the cohomological dimension of the group $P_{n-3}(\St\setminus \brak{x_1,x_2,x_3})$. Furthermore, for all $m\geq 1$, the cohomological dimension of the group $P_{m}(\St\setminus\brak{x_1,x_2,x_3})$ is equal to $m$.
\item Let $n\geq 3$. Then the virtual cohomological dimension of both $B_n(\rp)$ and $P_n(\rp)$ is equal to the cohomological dimension of the group $P_{n-2}(\rp\setminus\brak{x_1, x_2})$. Furthermore, for all $m\geq 1$, the cohomological dimension of the group $P_{m}(\rp\setminus \brak{x_1, x_2})$ is equal to $m$.
\end{enumerate}
\end{thm}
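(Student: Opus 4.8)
The plan is to treat the two parts uniformly, reducing in each case the computation of $\operatorname{vcd}$ of $B_n(S)$ and $P_n(S)$ to the ordinary cohomological dimension of an iterated surface-configuration space, and then to compute the latter by an induction on the number of strings using the Fadell--Neuwirth fibrations. First I would observe that, since $\St$ and $\rp$ have finite virtually torsion-free fundamental group, the groups $P_n(S)$ and $B_n(S)$ are virtually torsion-free: for $S=\St$ this follows from the decomposition~\reqref{pns2sum}, whose first factor $P_{n-3}(\St\setminus\brak{x_1,x_2,x_3})$ is torsion free (being an iterated extension of free groups via Fadell--Neuwirth), and for $S=\rp$ from Theorem~\ref{th:th4}, which exhibits $L_n$ as an iterated semi-direct product of free groups, hence torsion free, of finite index in $K_n$ and therefore of finite index in both $P_n(\rp)$ and $B_n(\rp)$ (the latter because $K_n$ has finite index in $B_n(\rp)$ by the González-Meneses--Paris description of the quotient as $\pi_1(\Pi_1^n\rp)\rtimes\sn$, which is finite). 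Consequently the virtual cohomological dimension is well defined, and since passing to a finite-index subgroup does not change it, $\operatorname{vcd}(B_n(S))=\operatorname{vcd}(P_n(S))$ equals the cohomological dimension of the relevant torsion-free finite-index subgroup: $P_{n-3}(\St\setminus\brak{x_1,x_2,x_3})$ when $S=\St$, and $L_n$ when $S=\rp$. In the $\rp$ case I would then note that $L_n$ has finite index in $P_{n-2}(\rp\setminus\brak{x_1,x_2})$ by Theorem~\ref{th:th4}, and the latter is itself torsion free, so $\cd{L_n}=\cd{P_{n-2}(\rp\setminus\brak{x_1,x_2})}$; this gives the first assertion of each part.

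It remains to prove the two numerical statements: $\cd{P_m(\St\setminus\brak{x_1,x_2,x_3})}=m$ and $\cd{P_m(\rp\setminus\brak{x_1,x_2})}=m$ for all $m\geq 1$. Here the surfaces $X=\St\setminus\brak{x_1,x_2,x_3}$ and $X=\rp\setminus\brak{x_1,x_2}$ are open surfaces, so each has the homotopy type of a wedge of circles; in particular $P_1(X)=\pi_1(X)$ is a nontrivial free group, giving the base case $\cd{P_1(X)}=1$. For the inductive step I would use the Fadell--Neuwirth short exact sequence
\begin{equation*}
1\to \pi_1(X\setminus\brak{y_1,\ldots,y_{m-1}}) \to P_m(X) \to P_{m-1}(X) \to 1,
\end{equation*}
whose kernel is a finitely generated free group since $X\setminus\brak{y_1,\ldots,y_{m-1}}$ is again an open surface. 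The upper bound $\cd{P_m(X)}\leq \cd{P_{m-1}(X)}+1\leq m$ then follows inductively from the standard inequality $\cd{G}\leq\cd{N}+\cd{G/N}$ for an extension $1\to N\to G\to G/N\to 1$ (see~\cite[Chapter~VIII]{Br}), together with $\cd{\text{free}}\le 1$. For the matching lower bound $\cd{P_m(X)}\geq m$, I would exhibit a subgroup isomorphic to $\Z^m$ (for instance the free abelian group generated by $m$ disjoint small loops, one around each of $m$ disjoint discs in $X$, pushed into successive configuration coordinates), and invoke monotonicity of cohomological dimension under subgroups, $\cd{\Z^m}=m\leq\cd{P_m(X)}$. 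Combining the two bounds gives $\cd{P_m(X)}=m$.

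The main obstacle is not any single hard computation but rather the bookkeeping needed to justify, cleanly and in the non-orientable case in particular, that all the relevant finite-index subgroups are torsion free and that the finite-index relationships ($L_n\leq K_n\leq P_n(\rp)$, $L_n\leq P_{n-2}(\rp\setminus\brak{x_1,x_2})$, and the analogous chain for $\St$) hold as claimed; this is where Proposition~\ref{prop:prop3}(\ref{it:prop3a})(\ref{it:prop3aii}), Theorem~\ref{th:birman}, and especially Theorem~\ref{th:th4} are used in an essential way. A secondary point requiring a little care is the lower-bound construction of a $\Z^m$ inside $P_m(X)$ when $X$ is $\rp$ minus two points: one must check that the relevant loops can be chosen to lie in disjoint embedded discs in $X$ and that their images in successive strands commute, which follows from the usual "disjoint support" argument for pure braid generators and causes no real difficulty once the picture is set up. Apart from this, the argument is a routine assembly of Serre's theorem on $\operatorname{vcd}$, the extension inequality for $\operatorname{cd}$, and the Fadell--Neuwirth fibrations.
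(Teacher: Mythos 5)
Your reduction of the virtual cohomological dimension to $\cd{P_{n-3}(\St\setminus\brak{x_1,x_2,x_3})}$ and $\cd{P_{n-2}(\rp\setminus\brak{x_1,x_2})}$ is correct (the detour through $L_n$ is unnecessary, since $P_{n-2}(\rp\setminus\brak{x_1,x_2})$ is itself torsion free and of finite index in $P_n(\rp)$ because $P_2(\rp)\cong \quat$ is finite, which is exactly what the paper uses), and your upper bound $\cd{P_m(X)}\leq m$ by induction on the Fadell--Neuwirth extensions is the paper's argument verbatim. The genuine gap is in your lower bound. You propose to embed $\Z^m$ in $P_m(X)$ via ``$m$ disjoint small loops, one around each of $m$ disjoint discs, pushed into successive coordinates'' and a disjoint-support commutation argument. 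This construction does not work: if the loop of the $i$\th\ strand lies in an embedded disc containing no puncture of $X$ and no other basepoint, the corresponding braid is trivial; and since $X$ has only three (resp.\ two) punctures and the discs are required to be pairwise disjoint, at most three (resp.\ two) of your braids can be non-trivial, so you obtain nothing close to a rank-$m$ free Abelian subgroup for large $m$. A correct subgroup of rank $m$ must use \emph{nested} supports (e.g.\ full twists on the strands $1,\ldots,k$ around a fixed puncture, $k=1,\ldots,m$, or equivalently the image of the nested full twists under an identification $P_m(\dt\setminus\brak{x_1})\cong P_{m+1}$ together with injectivity of the geometric embedding of the punctured disc into $X$, which for the non-orientable case is not a triviality one may wave at); proving that these commuting elements generate $\Z^m$ is genuine work and is nowhere addressed in your sketch.

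For comparison, the paper avoids this entirely: the lower bound is proved by a second induction on $m$, showing $H^m(\Gamma_{m,l}(S);\Z)\neq 0$ with \emph{trivial} integer coefficients, via the Serre spectral sequence of the fibration~\reqref{fngamma}. The only non-formal input is the identification of the coefficient module on the $q=1$ line: the action of $\Gamma_{m,l}(S)$ on the Abelianisation $N\up{Ab}$ of the free kernel is trivial for $S=\St$, while for $S=\rp$ one computes the coinvariants $N\up{Ab}/H\cong\Z^l$ from the relations of \repr{prej}, and the vanishing $H^{m+1}(\Gamma_{m,l}(S),H)=0$ (from the already-proved upper bound) makes $H^m(\Gamma_{m,l}(S),N\up{Ab})\to H^m(\Gamma_{m,l}(S),N\up{Ab}/H)=(H^m(\Gamma_{m,l}(S),\Z))^s$ surjective, whence $E_2^{m,1}=E_\infty^{m,1}\neq 0$ and $H^{m+1}(\Gamma_{m+1,l}(S);\Z)\neq 0$. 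If you want to keep your subgroup-theoretic route, you must either carry out the nested-twist construction and prove freeness (e.g.\ via mapping class group results on Abelian subgroups, or via injectivity of geometric subgroups), or switch to a cohomological argument of the paper's type; as written, the key inequality $\cd{P_m(X)}\geq m$ is unproved.
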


The methods of the proof of \reth{prop12} have recently been applied to compute the cohomological dimension of the braid groups of all other compact surfaces (orientable and non orientable) without boundary~\cite{GGM}. \reth{prop12} also allows us to deduce the virtual cohomological dimension of the punctured mapping class groups of $\St$ and $\rp$. If $n\geq 0$, let $\mathcal{MCG}(S,n)$ denote the mapping class group of a connected, compact surface $S$ relative to an $n$-point set. If $S$ is orientable then Harer determined the virtual cohomological dimension of $\mathcal{MCG}(S,n)$~\cite[Theorem 4.1]{H}. In the case of $\St$ and $\dt$, he obtained the following results:
\begin{enumerate}[(a)]
\item if $n\geq 3$, the virtual cohomological dimension of $\mathcal{MCG}(\St,n)$ is equal to $n-3$.
\item if $n\geq 2$, the cohomological dimension of $\mathcal{MCG}(\dt,n)$ is equal to $n-1$ (recall that $\mathcal{MCG}(\dt,n)$ is isomorphic to $B_{n}$~\cite{Bi2}).
\end{enumerate}
As a consequence of \reth{prop12}, we are able to compute the virtual cohomological dimension of $\mathcal{MCG}(S,n)$ for $S=\St$ and $\rp$.

\begin{cor}\label{cor:vcdmcg}
Let $n\geq 4$ (resp.\ $n\geq 3$). Then the virtual cohomological dimension of $\mathcal{MCG}(\St,n)$ (resp.\ $\mathcal{MCG}(\rp,n)$) is finite, and is equal to $n-3$ (resp.\ $n-2$).
\end{cor}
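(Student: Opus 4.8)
The plan is to derive the statement directly from \reth{prop12} together with the Birman exact sequence relating the punctured mapping class group to the braid group of the surface. Recall that for a connected compact surface $S$ without boundary, there is a short exact sequence
\begin{equation*}
1\to B_{n}(S)\to \mathcal{MCG}(S,n)\to \mathcal{MCG}(S,0)\to 1,
\end{equation*}
obtained by forgetting the marked points (the fibre is the braid group $B_{n}(S)$ of $S$ by the Birman-Fadell-Neuwirth theorem, valid here because $\St$ and $\rp$ have finite, hence virtually trivial, mapping class group). For $S=\St$ we have $\mathcal{MCG}(\St,0)\cong\Z_{2}$, and for $S=\rp$ the group $\mathcal{MCG}(\rp,0)$ is trivial; in either case the quotient is finite. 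Hence $\mathcal{MCG}(S,n)$ is virtually torsion-free (it contains a finite-index torsion-free subgroup of $B_{n}(S)$, which is itself virtually torsion-free by \reth{prop12}), so its virtual cohomological dimension is defined and equals that of $B_{n}(S)$, since passing to a finite-index subgroup does not change the vcd.

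The main step is therefore to invoke \reth{prop12}: for $n\geq 4$ the vcd of $B_{n}(\St)$ equals $\cd(P_{n-3}(\St\setminus\brak{x_{1},x_{2},x_{3}}))=n-3$, and for $n\geq 3$ the vcd of $B_{n}(\rp)$ equals $\cd(P_{n-2}(\rp\setminus\brak{x_{1},x_{2}}))=n-2$. Combining this with the previous paragraph gives $\operatorname{vcd}(\mathcal{MCG}(\St,n))=n-3$ for $n\geq 4$ and $\operatorname{vcd}(\mathcal{MCG}(\rp,n))=n-2$ for $n\geq 3$, as claimed. In the spherical case this recovers Harer's computation, now valid in the stated range (the cases $n=3$ for $\St$, where the vcd is $0$, can be checked separately if desired, since $\mathcal{MCG}(\St,3)$ is finite).

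The only genuine obstacle is ensuring that the Birman sequence above really does have $B_{n}(S)$ as its kernel when $S=\St$ or $\rp$: the usual statement of the Birman exact sequence requires the components of $\operatorname{Diff}(S)$ to be simply connected (or at least that $\pi_{1}(\operatorname{Diff}(S))$ inject suitably), and for $\St$ and $\rp$ one must recall that $\operatorname{Diff}^{+}(\St)\simeq SO(3)$ and $\operatorname{Diff}(\rp)\simeq SO(3)$, so that $\pi_{1}$ of the diffeomorphism group is $\Z_{2}$, not trivial. One then uses the long exact sequence of the fibration $\operatorname{Diff}(S)\to\operatorname{Diff}(S)\to F_{n}(S)/\sn$ (evaluation at the marked points) and checks that the image of $\pi_{1}(\operatorname{Diff}(S))=\Z_{2}$ in $B_{n}(S)=\pi_{1}(F_{n}(S)/\sn)$ is exactly the central $\Z_{2}$ generated by the full twist $\ft$; quotienting by it, one obtains $\mathcal{MCG}(S,n)$ as an extension of the finite group $\mathcal{MCG}(S,0)$ by $B_{n}(S)/\ang{\ft}$. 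Since $B_{n}(S)/\ang{\ft}$ is still a finite-index quotient... more precisely, it has the same vcd as $B_{n}(S)$ because $\ang{\ft}$ is finite central, so $\operatorname{cd}$ of torsion-free finite-index subgroups is unaffected. This bookkeeping with the central $\Z_{2}$ is the one point requiring care; everything else is a formal consequence of \reth{prop12} and Serre's theorem on the vcd of virtually torsion-free groups.
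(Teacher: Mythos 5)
Your argument is correct and, in its final form, is essentially the paper's: the paper simply quotes Scott's short exact sequence $1\to\bigl\langle\ft\bigr\rangle\to B_{n}(S)\stackrel{\beta}{\to}\mathcal{MCG}(S,n)\to 1$ for $S=\St,\rp$ and pushes a torsion-free finite-index subgroup of $B_{n}(S)$ forward through $\beta$ before invoking \reth{prop12}, which is exactly the bookkeeping with the central $\Z_{2}=\bigl\langle\ft\bigr\rangle$ that you carry out in your last paragraph. The only caveat is that the sequence in your first paragraph, with kernel $B_{n}(S)$ and quotient $\mathcal{MCG}(S,0)$, is not valid for $\St$ or $\rp$ (as you yourself note and repair via the evaluation fibration); the corrected statement you reach, namely that $\mathcal{MCG}(S,n)$ contains $B_{n}(S)/\bigl\langle\ft\bigr\rangle$ with finite index, is precisely the content of the sequence the paper cites from Scott.
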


If $S=\St$ or $\rp$ then for the values of $n$ given by \reth{prop12} and \reco{vcdmcg}, the virtual cohomological dimension of $\mathcal{MCG}(S,n)$ is equal to that of $B_{n}(S)$. If $S=\St$, we thus recover the corresponding result of Harer.

\subsection*{Acknowledgements}

This work took place during the visits of the first author to the Laboratoire de Math\'e\-matiques Nicolas Oresme during the periods 2\up{nd}--23\up{rd}~December 2012, 29\up{th}~November--22\up{nd}~December 2013 and 4\up{th}~October--1\up{st}~November~2014, and of the visits of the second author to the Departamento de Matem\'atica do IME~--~Universidade de S\~ao Paulo during the periods 10\up{th}~November--1\up{st}~December 2012,~1\up{st}--21\up{st} July 2013 and 10\up{th}~July--2\up{nd}~August 2014, and was supported by the international Cooperation Capes-Cofecub project n\up{o}~Ma~733-12 (France) and n\up{o}~1716/2012 (Brazil), and the CNRS/Fapesp programme n\up{o}~226555 (France) and n\up{o}~2014/50131-7 (Brazil).

\section{The structure of $K_{n}$, and Birman's conjecture for $\St$ and $\rp$}\label{sec:kerrp2}

Let $n\in \N$. As we mentioned in the introduction, if $S$ is a surface different from $\St$ and $\rp$, the kernel of the homomorphism $\map{\iota_{\#}}{P_n(S)} [{\pi_1\left(\prod_1^n\, S\right)}]$ was studied in~\cite{Bi,Gol}, and that if $S=\St$ then $\ker{\iota_{\#}}=P_n(\St)$. In the first part of this section, we recall a presentation of $P_{n}(\rp)$, and we prove \repr{prop3}(\ref{it:prop3a})(\ref{it:prop3ai}). The second part of this section is devoted to proving the rest of \repr{prop3} and \reth{birman}, the latter being Birman's conjecture for $\St$ and $\rp$. 

Consider the model of $\rp$ given by identifying antipodal boundary points of $\dt$. We equip $F_{n}(\rp)$ with a basepoint $(x_{1},\ldots,x_{n})$. For $1\leq i<j\leq n$ (resp.\ $1\leq k\leq n$), we define the element $A_{i,j}$ (resp.\ $\tau_{k}$, $\rho_{k}$) of $P_{n}(\rp)$ by the geometric braids depicted in Figure~\ref{fig:gens}.
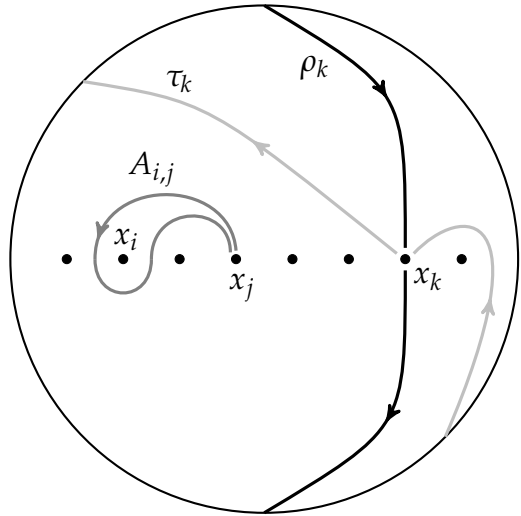
\begin{figure}[h]
\hfill
\begin{tikzpicture}[scale=0.75]
\draw[thick] (0,0) circle(4.5);

\foreach \k in {-3.5,-2.5,...,3.5}
{\draw[fill] (\k,0) circle [radius=0.08];};

\node at (2.9, -0.35) {$x_{k}$};
\node at (-0.4, -0.5) {$x_{j}$};
\node at (-2.45, 0.35) {$x_{i}$};

\node at (-2, 1.6) {$A_{i,j}$};
\node at (-1.5, 3.15) {$\tau_{k}$};
\node at (0.9, 3.4) {$\rho_{k}$};

\draw[very thick, decoration={markings,mark=at position 0.5 with {\arrow{stealth'}}},postaction={decorate}] (2.5,-0.2) .. controls (2.5,-3) .. (0,-4.5);

\draw[very thick, decoration={markings,mark=at position 0.5 with {\arrow{stealth'}}},postaction={decorate}] (0,4.5) .. controls (2.5,3) .. (2.5,0.2);

\draw[very thick, color=gray!50,decoration={markings,mark=at position 0.5 with {\arrow{stealth'}}},postaction={decorate}] (2.35,0.1)  .. controls (-1,2.8) .. (-3.21,3.15);

\draw[very thick, color=gray!50,decoration={markings,mark=at position 0.5 with {\arrow{stealth'}}},postaction={decorate}] (3.21,-3.15) .. controls (5,1) and (3.5,1) .. (2.65,0.1);

%

\draw[very thick, color=gray, decoration={markings,mark=at position 0.9 with {\arrow{stealth'}}},postaction={decorate}] (-0.5,0.15) .. controls (-0.6,1.5) and (-3,1.5) .. (-3,0);

\draw[very thick, color=gray] (-3,0)  .. controls (-3,-0.8) and (-2,-0.8)  .. (-2,0);

\draw[very thick, color=gray] (-2,0)  .. controls (-2,1) and (-0.6,1)  .. (-0.6,0.12);
\end{tikzpicture}
\hspace*{\fill}
\caption{The elements $A_{i,j}$, $\tau_{k}$ and $\rho_{k}$ of $P_{n}(\rp)$.}
\label{fig:gens}
\end{figure}
Note that the arcs represent the projections of the strings onto $\rp$, so that all of the strings of the given braid are vertical, with the exception of the $j\up{th}$ (resp.\ $k\up{th}$) string that is based at the point $x_{j}$ (resp.\ $x_{k}$).


\begin{thm}[{\cite[Theorem~4]{GG4}}]\label{th:basicpres}
Let $n\in\N$. The following constitutes a presentation of the pure braid group $P_n(\rp)$:
\begin{enumerate}
\item[\underline{\textbf{generators:}}] $A_{i,j}$, $1\leq i<j\leq n$, and $\tau_k$, $1\leq k\leq n$.
\item[\underline{\textbf{relations:}}]\mbox{}
\begin{enumerate}[(a)]
\item\label{it:rel1} the Artin relations between the $A_{i,j}$ emanating from those of $P_n$:
\begin{equation}\label{eq:artinaij}
A_{r,s}A_{i,j}A_{r,s}^{-1}\!=\!
\begin{cases}
\! A_{i,j} & \text{if $i<r<s<j$ or $r<s<i<j$}\\
\! A_{i,j}^{-1} A_{r,j}^{-1}  A_{i,j} A_{r,j} A_{i,j} & \text{if $r<i=s<j$}\\
\! A_{s,j}^{-1} A_{i,j} A_{s,j} & \text{if $i=r<s<j$}\\
\! A_{s,j}^{-1}A_{r,j}^{-1} A_{s,j} A_{r,j} A_{i,j} A_{r,j}^{-1} A_{s,j}^{-1} A_{r,j} A_{s,j} & \text{if $r<i<s<j$.}
\end{cases}
\end{equation}
\item\label{it:rel2} for all $1\leq i<j\leq n$, $\tau_i\tau_j\tau_i^{-1} = \tau_j^{-1} A_{i,j}^{-1}  \tau_j^2$.
\item\label{it:rel3} for all $1\leq i\leq n$, $\tau_i^2= A_{1,i}\cdots A_{i-1,i} A_{i,i+1} \cdots A_{i,n}$.
\item\label{it:rel4} for all $1\leq i<j\leq n$ and $1\leq k\leq n$, $k\neq j$,
\begin{equation*}\label{eq:relspn}
\tau_k A_{i,j}\tau_k^{-1}=
\begin{cases}
A_{i,j} & \text{if $j<k$ or $k<i$}\\
\tau_j^{-1} A_{i,j}^{-1} \tau_j & \text{if $k=i$}\\
\tau_j^{-1} A_{k,j}^{-1} \tau_j A_{k,j}^{-1} A_{i,j} A_{k,j} \tau_j^{-1} A_{k,j} \tau_j & \text{if $i<k<j$}.
\end{cases}
\end{equation*}
\end{enumerate}
\end{enumerate}
\end{thm}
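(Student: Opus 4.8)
The plan is to establish the presentation by induction on $n$, the inductive engine being the Fadell--Neuwirth fibration $\map{p}{F_{n}(\rp)}[F_{n-1}(\rp)]$ that forgets the last configuration point. The cases $n=1$ and $n=2$ are treated separately by hand: for $n=1$ the asserted presentation is $\setang{\tau_{1}}{\tau_{1}^{2}=1}$, which is $\pi_{1}(\rp)\cong\Z_{2}$, and for $n=2$ a short sequence of Tietze transformations collapses the asserted presentation to $\setang{\tau_{1},\tau_{2}}{\tau_{1}^{2}=\tau_{2}^{2},\; \tau_{1}^{4}=1,\; \tau_{1}\tau_{2}\tau_{1}^{-1}=\tau_{2}^{-1}}$, the usual presentation of the quaternion group $\quat$, which is known to be $P_{2}(\rp)$. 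These two cases must be done by hand because $\rp$ is not aspherical: the connecting homomorphism $\pi_{2}(F_{1}(\rp))=\pi_{2}(\rp)\to\pi_{1}(\rp\setminus\brak{x_{1}})$ is multiplication by $\pm 2$, so the naive short exact sequence of fundamental groups fails at $n=2$.

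For the inductive step, fix $n\geq 3$ and assume the theorem holds for $P_{n-1}(\rp)$. The fibre of $p$ is the aspherical open surface $\rp\setminus\brak{x_{1},\dots,x_{n-1}}$, whose fundamental group $N$, based at $x_{n}$, is free of rank $n-1$. Since $\pi_{2}(F_{m}(\rp))=0$ for all $m\geq 2$ --- which one proves by the same fibrations, the base case being that the universal cover of $F_{2}(\rp)$ is homotopy equivalent to $\St[3]$ --- the homotopy exact sequence of $p$ collapses to
\begin{equation*}
1\to N\to P_{n}(\rp)\stackrel{p_{\#}}{\to}P_{n-1}(\rp)\to 1.
\end{equation*}
The first geometric step is to identify $N$: in the model of $\rp$ as $\dt$ with antipodal boundary points identified, the standard presentation of the fundamental group of a punctured non-orientable surface realises $N$ as the group generated by $A_{1,n},\dots,A_{n-1,n},\tau_{n}$ subject to the single relation $\tau_{n}^{2}=A_{1,n}A_{2,n}\cdots A_{n-1,n}$ --- that is, relation~(c) of the theorem for $i=n$ --- and exhibits $N$ as $\F[n-1]$.

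I would then feed this into the standard presentation of a group extension (equivalently, a Reidemeister--Schreier computation). Choosing for each generator of $P_{n-1}(\rp)$ a ``same-named'' lift to $P_{n}(\rp)$ --- a braid representative whose moving strand avoids $x_{n}$, with the lifts of the $A_{i,j}$ for $j\leq n-1$ chosen to be supported in a disc missing $x_{n}$ --- one obtains a presentation of $P_{n}(\rp)$ on the generators of the theorem, with relations of three kinds: (i) the defining relation of $N$ just found; (ii) for each generator $x$ of $P_{n-1}(\rp)$ and each $y\in\brak{A_{1,n},\dots,A_{n-1,n},\tau_{n}}$, the expression of $\widetilde{x}\,y\,\widetilde{x}^{-1}$ as a word in $A_{1,n},\dots,A_{n-1,n},\tau_{n}$; and (iii) for each defining relator of $P_{n-1}(\rp)$, the expression of its lift, an element of $N$, as such a word. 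Family (iii) recovers relations~(a)--(d) with all indices $\leq n-1$: the Artin relators lift to the identity, while the lift of the relator behind relation~(c) for $i\leq n-1$ evaluates to $A_{i,n}$, since the full-boundary loop of the $i\up{th}$ strand now encircles the new strand as well --- this is relation~(c) for $i\leq n-1$ in $P_{n}(\rp)$. Family (ii) produces precisely relations~(a), (b) and~(d) in the cases $j=n$. The remaining instances of relations~(a)--(d) that involve the index $n$ --- for example relation~(d) with $k=n$, asserting that $\tau_{n}$ commutes with each $A_{i,j}$ for $j\leq n-1$ --- are consequences of (i)--(iii), and I would add them back by Tietze transformations and check that the outcome is exactly the list in the theorem.

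The computational heart of the argument, and the step I expect to be the main obstacle, is family (ii): computing $\widetilde{x}\,y\,\widetilde{x}^{-1}$ amounts to determining the action of $P_{n-1}(\rp)$ on $\pi_{1}(\rp\setminus\brak{x_{1},\dots,x_{n-1}})$ by point-pushing, and tracking how the last strand is dragged around when the remaining strands perform the braids $A_{r,s}$ and, above all, the orientation-reversing braids $\tau_{k}$ is delicate; nailing down the signs and the conjugating factors of the shape $\tau_{j}^{-1}(\cdots)\tau_{j}$ occurring in relations~(b) and~(d), which reflect the non-orientability of $\rp$, is where essentially all of the work lies. Everything else reduces to bookkeeping with Tietze transformations and to the low-index base cases. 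An alternative, possibly closer to the route of~\cite{GG4}, would be to first establish a presentation of the full braid group $B_{n}(\rp)$ --- going back to Van Buskirk --- by the analogous $\sn$-equivariant argument, and then extract $P_{n}(\rp)$ as the index-$n!$ subgroup via Reidemeister--Schreier; the computations there are comparable in length.
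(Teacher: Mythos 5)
Note first that this paper does not prove the statement at all: it is quoted from \cite[Theorem~4]{GG4}, so the only comparison possible is with the method of that source, and your plan --- induction on $n$ via the Fadell--Neuwirth fibration, exactness granted by $\pi_{2}(F_{m}(\rp))=0$ for $m\geq 2$, the base cases $P_{1}(\rp)\cong\Z_{2}$ and $P_{2}(\rp)\cong\quat$ done by hand, identification of the fibre group as the free group on $A_{1,n},\ldots,A_{n-1,n},\tau_{n}$ modulo the surface relation, and then the standard presentation-of-an-extension (Reidemeister--Schreier) bookkeeping --- is exactly the standard route used there. Two caveats. First, what you have written is a strategy rather than a proof: the whole content of the theorem sits in the computations you defer, namely the conjugation formulas of your family~(ii) (relations~(a), (b), (d) with $j=n$) and the verification that the relators~(a), (b), (d) with all indices at most $n-1$ lift to the identity while the relator behind~(c) lifts to $A_{i,n}$; note also that the relations~(d) with $k=n$ are not optional consequences to be restored afterwards by Tietze moves --- read backwards they are precisely the required expressions of $A_{i,j}\tau_{n}A_{i,j}^{-1}$ in kernel generators that the extension method demands, so they belong to the same computational family. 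Second, a small factual slip in your justification of the base cases: the connecting homomorphism $\pi_{2}(\rp)\to\pi_{1}(\rp\setminus\brak{x_{1}})\cong\Z$ has image of index $4$, not $2$ (the core generator of the Möbius band fibre maps to an element of order $4$ of $\quat$), although all your argument actually needs is that this map is nontrivial.
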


This enables us to prove that $\iota_{\#}$ is in fact Abelianisation, which is part~(\ref{it:prop3a})(\ref{it:prop3ai}) of \repr{prop3}.
\begin{prop}\label{prop:abeliota}
Let $n\in \N$. The homomorphism $\map{\iota_{\#}}{P_n(\rp)} [{\pi_1(\prod_1^n\, \rp)}]$ is defined on the generators of \reth{basicpres} by $\iota_{\#}(A_{i,j})=(\overline{0},\ldots,\overline{0})$ for all $1\leq i<j\leq n$, and $\iota_{\#}(\tau_k)=(\overline{0},\ldots, \overline{0}, \underbrace{\overline{1}}_{\mathclap{\text{$k\up{th}$ position}}}, \overline{0}, \ldots,\overline{0})$ for all $1\leq k\leq n$. Further, $\iota_{\#}$ is Abelianisation, and $\ker{\iota_{\#}}=K_{n}= \Gamma_{2}(P_{n}(\rp))$.
\end{prop}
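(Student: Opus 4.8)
The plan is to identify the abelianisation of $\pi_1(\prod_1^n \rp)$ first, and then compute the image of each generator of $P_n(\rp)$ under $\iota_{\#}$ directly from the geometric description of the braids, checking that the resulting assignment is compatible with the relations of \reth{basicpres}. Since $\pi_1(\rp)\cong \Z_2$ and the fundamental group of a product is the product of the fundamental groups, we have $\pi_1(\prod_1^n \rp)\cong \Z_2^n$, which is already abelian; I would record this group additively as $(\Z_2)^n$ with generators $e_k = (\overline{0},\ldots,\overline{1},\ldots,\overline{0})$.

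Next I would compute $\iota_{\#}$ on generators. The map $\iota_{\#}$ sends a pure braid to the homotopy class of each of its strings, read off coordinatewise: if $\beta\in P_n(\rp)$ then $\iota_{\#}(\beta) = ([\beta_1],\ldots,[\beta_n])$, where $[\beta_k]\in \pi_1(\rp,x_k)\cong \Z_2$ is the class of the $k\up{th}$ string projected to $\rp$. From Figure~\ref{fig:gens}, the braid $A_{i,j}$ has all strings vertical except the $j\up{th}$, whose projection is a loop that does not pass through the boundary identification (it is null-homotopic in $\rp$), so $\iota_{\#}(A_{i,j}) = (\overline{0},\ldots,\overline{0})$ for all $i<j$. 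The braid $\tau_k$ has all strings vertical except the $k\up{th}$, whose projection is a loop that crosses the boundary of $\dt$ once, hence represents the generator of $\pi_1(\rp,x_k)$; thus $\iota_{\#}(\tau_k) = e_k$. This gives the stated formulas, and well-definedness of the induced homomorphism is automatic since $\iota$ is a genuine map of spaces, so no relation-checking is strictly required — but it is reassuring to verify, e.g., that relation~(\ref{it:rel3}), $\tau_i^2 = A_{1,i}\cdots A_{i-1,i}A_{i,i+1}\cdots A_{i,n}$, maps to $2 e_i = 0$ on the left and $0$ on the right in $\Z_2^n$, and similarly for the others.

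It then remains to see that $\iota_{\#}$ \emph{is} abelianisation, i.e.\ that the surjection $\iota_{\#}\colon P_n(\rp)\to \Z_2^n$ factors as $P_n(\rp)\to P_n(\rp)\up{Ab}$ followed by an isomorphism $P_n(\rp)\up{Ab}\xrightarrow{\sim}\Z_2^n$. Surjectivity of $\iota_{\#}$ is clear since the $\tau_k$ hit all the generators $e_k$. Since $\Z_2^n$ is abelian, $\Gamma_2(P_n(\rp))\subseteq \ker{\iota_{\#}}$, so $\iota_{\#}$ induces a surjection $\overline{\iota_{\#}}\colon P_n(\rp)\up{Ab}\to \Z_2^n$. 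To get injectivity, I would abelianise the presentation of \reth{basicpres}: in $P_n(\rp)\up{Ab}$, relation~(\ref{it:rel1}) makes all the $A_{i,j}$ trivial (each conjugation relation becomes an equation expressing $A_{i,j}$ as a word in the $A_{r,s}$'s whose exponent sum forces $A_{i,j}=0$; concretely $A_{s,j}^{-1}A_{i,j}A_{s,j}=A_{i,j}$ is vacuous, but the case $r<i=s<j$ gives $A_{i,j}=A_{i,j}^{-1}A_{r,j}^{-1}A_{i,j}A_{r,j}A_{i,j}$, i.e.\ $A_{r,j}=0$ after abelianising, and running over all indices kills every $A_{i,j}$), relation~(\ref{it:rel4}) then becomes vacuous, relation~(\ref{it:rel2}) becomes vacuous once the $A_{i,j}$ vanish, and relation~(\ref{it:rel3}) becomes $2\tau_i = 0$. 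Hence $P_n(\rp)\up{Ab}\cong \Z_2^n$ generated by the images of the $\tau_k$, and $\overline{\iota_{\#}}$ maps these generators bijectively to the $e_k$, so it is an isomorphism. Therefore $\ker{\iota_{\#}} = \Gamma_2(P_n(\rp))$, and combined with the definition $K_n = \ker{\iota_{\#}}$ this completes the proof. The only mildly delicate point is the bookkeeping in abelianising relation~(\ref{it:rel1}) to confirm that every $A_{i,j}$ dies; everything else is routine.
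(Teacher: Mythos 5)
Your overall strategy is the same as the paper's (compute $\iota_{\#}$ on the generators geometrically, then identify $P_n(\rp)\up{Ab}$ by abelianising the presentation of \reth{basicpres}), and the first half is fine. But the decisive step of your abelianisation is wrong: the Artin relations~(\ref{it:rel1}) do \emph{not} kill the $\overline{A_{i,j}}$. Every case of~(\ref{it:rel1}) is a conjugation relation whose right-hand side has exponent sum $+1$ in $A_{i,j}$ and exponent sum $0$ in every other generator; in particular, in the case $r<i=s<j$ the word $A_{i,j}^{-1}A_{r,j}^{-1}A_{i,j}A_{r,j}A_{i,j}$ abelianises to $\overline{A_{i,j}}$, not to $\overline{A_{r,j}}+\overline{A_{i,j}}$, so the relation becomes $\overline{A_{i,j}}=\overline{A_{i,j}}$ and gives no information — consistent with the standard fact that for the Artin pure braid group $P_n$ one has $P_n\up{Ab}\cong\Z^{n(n-1)/2}$, freely generated by the classes of the $A_{i,j}$. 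So as written, your argument never establishes $\overline{A_{i,j}}=0$, and without that you only get a surjection $P_n(\rp)\up{Ab}\to\Z_2^n$, not an isomorphism; note also that relation~(\ref{it:rel4}) (case $k=i$) only yields $2\overline{A_{i,j}}=0$.

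The repair is immediate and is exactly what the paper uses: abelianise relation~(\ref{it:rel2}), $\tau_i\tau_j\tau_i^{-1}=\tau_j^{-1}A_{i,j}^{-1}\tau_j^{2}$, which gives $\overline{\tau_j}=-\overline{A_{i,j}}+\overline{\tau_j}$, i.e.\ $\overline{A_{i,j}}=0$ for all $1\leq i<j\leq n$. Then relation~(\ref{it:rel3}) gives $2\overline{\tau_i}=0$, relations~(\ref{it:rel1}) and~(\ref{it:rel4}) contribute nothing further, and the rest of your argument (surjectivity of the induced map on the $\overline{\tau_k}$, hence $P_n(\rp)\up{Ab}\cong\Z_2^{n}$ and $\ker{\iota_{\#}}=\Gamma_2(P_n(\rp))$) goes through unchanged. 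Your claim that relation~(\ref{it:rel2}) ``becomes vacuous once the $A_{i,j}$ vanish'' has the logic backwards: it is the relation that makes them vanish.
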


\begin{proof}
For $1\leq k\leq n$, let $\map{p_{k}}{F_{n}(\rp)}[\rp]$ denote projection onto the $k\up{th}$ coordinate. Observe that $\iota_{\#}=p_{1\#}\times \cdots \times p_{n\#}$, where $\map{p_{k\#}}{P_{n}(\rp)}[\pi_{1}(\rp)]$ is the induced homomorphism on the level of fundamental groups. Identifying $\pi_{1}(\rp)$ with $\Z_{2}$ and using the geometric realisation of Figure~\ref{fig:gens} of the generators of the presentation of $P_{n}(\rp)$ given by \reth{basicpres}, it is straightforward to check that for all $1\leq k,l\leq n$ and $1\leq i<j\leq n$, $p_{k\#}(A_{i,j})=\overline{0}$, $p_{k\#}(\tau_{l})=\overline{0}$ if $l\neq k$ and $p_{k\#}(\tau_{k})=\overline{1}$, and this yields the first part of the proposition. The second part follows easily from the presentation of the Abelianisation $(P_{n}(\rp))\up{Ab}$ of $P_{n}(\rp)$ obtained from \reth{basicpres}. More precisely, if we denote the Abelianisation of an element $x\in P_{n}(\rp)$ by $\overline{x}$, relations~(\ref{it:rel2}) and~(\ref{it:rel3}) imply respectively that for all $1\leq i<j\leq n$ and $1\leq k\leq n$, $\overline{A_{i,j}}$ and $\overline{\tau_{k}}^{2}$ represent the trivial element of $(P_{n}(\rp))\up{Ab}$. Since the remaining relations give no other information under Abelianisation, it follows that $(P_{n}(\rp))\up{Ab}\cong \Z_{2}\oplus \cdots \oplus \Z_{2}$, where $\overline{\tau_{k}}= (\overline{0},\ldots, \overline{0}, \underbrace{\overline{1}}_{\mathclap{\text{$k\up{th}$ position}}}, \overline{0}, \ldots,\overline{0})$ and $\overline{A_{i,j}}=(\overline{0},\ldots,\overline{0})$ via this isomorphism, and the Abelianisation homomorphism indeed coincides with $\iota_{\#}$ on $P_{n}(\rp)$.
\end{proof}

\begin{rems}\mbox{}\label{rem:propsK}
\begin{enumerate}[(a)]
\item\label{it:propsKa} Since $K_{n}=\Gamma_{2}(P_{n}(\rp))$, it follows immediately that $K_{n}$ is normal in $B_n(\rp)$, since $\Gamma_{2}(P_{n}(\rp))$ is characteristic in $P_n(\rp)$, and $P_n(\rp)$ is normal in $B_n(\rp)$.
\item A presentation of $K_{n}$ may be obtained by a long but routine computation using the Reidemeister-Schreier method, although it is not clear how to simplify the presentation. In \reth{th4}, we will provide an alternative description of $K_{n}$ using algebraic methods. 
\item\label{it:propsKc} In what follows, we shall use Van Buskirk's presentation of $B_{n}(\rp)$~\cite[page~83]{vB} whose generating set consists of the standard braid generators $\sigma_{1},\ldots,\sigma_{n-1}$ emanating from the $2$-disc, as well as the surface generators $\rho_{1},\ldots,\rho_{n}$ depicted in Figure~\ref{fig:gens}. We have the following relation between the elements $\tau_{k}$ and $\rho_{k}$:
\begin{equation*}
\tau_{k}=\rho_{k}^{-1} A_{k,k+1}\cdots A_{k,n} \quad \text{for all $1\leq k\leq n$,}
\end{equation*}
where for $1\leq i<j\leq n$, $A_{i,j}=\sigma_{j-1}\cdots \sigma_{i+1}\sigma_{i}^2\sigma_{i+1}^{-1}\cdots \sigma_{j-1}^{-1}$.
In particular, it follows from \repr{abeliota} that:
\begin{equation}\label{eq:rhotauk}
\text{$\iota_{\#}(\rho_{k})= \iota_{\#}(\tau_{k})=(\overline{0},\ldots, \overline{0}, \underbrace{\overline{1}}_{\mathclap{\text{$k\up{th}$ position}}}, \overline{0}, \ldots,\overline{0})$ for all $1\leq k\leq n$.}
\end{equation}
\end{enumerate}
\end{rems}

If $n\geq 2$, the full twist braid $\ft$, which may be defined by $\ft=(\sigma_{1}\cdots \sigma_{n-1})^{n}$, is of order~$2$~\cite[page~95]{vB}, it generates the centre of $B_{n}(\rp)$~\cite[Proposition~6.1]{Mu}, and is the unique element of $B_{n}(\rp)$ of order~$2$~\cite[Proposition~23]{GG3}. Since $\ft\in P_{n}(\rp)$, it thus belongs to the centre of $P_{n}(\rp)$, and just as for the Artin braid groups and the braid groups of $\St$, it generates the centre of $P_{n}(\rp)$:

\begin{prop}\label{prop:centrerp2}
Let $n\geq 2$. Then the centre $Z(P_{n}(\rp))$ of $P_{n}(\rp)$ is generated by $\ft$.
\end{prop}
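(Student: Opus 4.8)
The plan is to exploit the Fadell–Neuwirth short exact sequence and the known fact that $\ft$ generates the centre of the Artin pure braid group $P_{n}$, together with the structure of $P_{2}(\rp)$, which is the dicyclic (generalized quaternion) group $\dic{16}$ of order $16$. First I would recall that $Z(P_{n}(\rp))$ contains $\ang{\ft}\cong \Z_{2}$, so it suffices to show the reverse inclusion $Z(P_{n}(\rp))\subseteq \ang{\ft}$; equivalently, that any central element is either trivial or equal to $\ft$. The case $n=2$ can be checked directly from the presentation of $P_{2}(\rp)\cong\dic{16}$, whose centre is cyclic of order $2$ and is generated by $\ft[2]$.

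For $n\geq 3$ I would argue by induction, using the Fadell–Neuwirth sequence obtained by forgetting the last string,
\begin{equation*}
1\to \pi_{1}(\rp\setminus\brak{x_{1},\ldots,x_{n-1}}) \to P_{n}(\rp) \stackrel{q_{\#}}{\to} P_{n-1}(\rp) \to 1,
\end{equation*}
the kernel being a free group of rank $n-1$. Let $z\in Z(P_{n}(\rp))$. Its image $q_{\#}(z)$ is central in $P_{n-1}(\rp)$ (since $q_{\#}$ is surjective), so by the inductive hypothesis $q_{\#}(z)\in\ang{\ft[n-1]}$. Now $q_{\#}(\ft)=\ft[n-1]$, so after multiplying $z$ by $\ft$ if necessary we may assume $q_{\#}(z)=1$, i.e.\ $z$ lies in the free normal subgroup $N=\ker q_{\#}$. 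But a nontrivial element of a free group has cyclic centraliser inside that free group, while $z$ being central in $P_{n}(\rp)$ must be centralised by all of $N$; since $N$ is nonabelian free (rank $n-1\geq 2$) this forces $z=1$. Hence the original $z$ lies in $\ang{\ft}$, completing the induction.

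The main obstacle is the base case and making sure the reduction is clean: one must verify that $q_{\#}(\ft)=\ft[n-1]$ (which is standard, as forgetting a string sends the full twist to the full twist) and handle the possibility that $z\in N$ is nontrivial. The free-group centraliser argument is the crux: it uses that $N$ has rank $\geq 2$, which is why $n\geq 3$ is needed, and that conjugation by elements of $N$ fixes $z$. An alternative, which I would mention as a shortcut, is to invoke \repr{abeliota}: since $\ft\in\Gamma_{2}(P_{n}(\rp))=K_{n}$ and $K_{n}$ decomposes (by \repr{prop3}(\ref{it:prop3a})(\ref{it:prop3aii})) as $L_{n}\oplus\ang{\ft}$ with $L_{n}$ torsion-free, any torsion element of $P_{n}(\rp)$ — in particular any finite-order central element — must be trivial or $\ft$, and $\ft$ is the unique element of order $2$ in $B_{n}(\rp)$; combined with the fact that $Z(P_{n}(\rp))$ is finite (which itself follows from the Fadell–Neuwirth argument above, since an infinite-order central element would inject into the abelian image and contradict $q_{\#}(z)$ being torsion for all $n$), this pins down $Z(P_{n}(\rp))=\ang{\ft}$. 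I expect the Fadell–Neuwirth induction to be the more self-contained route and would present that as the primary proof.
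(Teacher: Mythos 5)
Your argument is correct and is essentially the paper's own proof: induction on $n$ via the Fadell--Neuwirth sequence obtained by forgetting the last string, using surjectivity to reduce a central element modulo $\ft$ to the free kernel of rank $n-1\geq 2$, whose centre is trivial (your centraliser formulation is equivalent), with the base case checked directly in $P_{2}(\rp)$. One correction to the base case: $P_{2}(\rp)$ is the quaternion group $\quat$ of order $8$ (it is $B_{2}(\rp)$ that is dicyclic of order $16$), though this does not affect the conclusion since the centre of $\quat$ is also cyclic of order $2$, generated by $\ft[2]$; your secondary ``alternative'' sketch is shakier (a torsion element of $P_{n}(\rp)$ need not lie in $K_{n}$), but it is not needed since the Fadell--Neuwirth induction you present as primary is complete.
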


\begin{proof}
We prove the result by induction on $n$. If $n=2$ then $P_2(\rp)\cong \quat$~\cite[page~87]{vB}, the quaternion group of order $8$, and the result follows since $\ft[2]$ is the element of $P_2(\rp)$ of order $2$. So suppose that $n\geq 3$. From the preceding remarks, $\ang{\ft}\subset Z(P_{n}(\rp))$. Conversely, let $x\in Z(P_{n}(\rp))$, and consider the following Fadell-Neuwirth short exact sequence:
\begin{equation*}
1\to \pi_{1}(\rp\setminus \brak{x_{1},\ldots,x_{n-1}})\to P_{n}(\rp)\xrightarrow{q_{(n-1)\#}} P_{n-1}(\rp)\to 1,
\end{equation*}
where $q_{(n-1)\#}$ is the surjective homomorphism induced on the level of fundamental groups by the projection $\map{q_{n-1}}{F_{n}(\rp)}[F_{n-1}(\rp)]$ onto the first $n-1$ coordinates. Now $q_{(n-1)\#}(x)\in Z(P_{n-1}(\rp))$ by surjectivity, and thus $q_{(n-1)\#}(x)=\Delta_{n-1}^{2\epsilon}$ for some $\epsilon\in \brak{0,1}$ by the induction hypothesis. Further, $q_{(n-1)\#}(\ft)=\ft[n-1]$, hence
\begin{equation*}
\Delta_{n}^{-2\epsilon} x\in \operatorname{Ker}(q_{(n-1)\#}) \cap Z(P_{n}(\rp)),
\end{equation*}
and thus $\Delta_{n}^{-2\epsilon} x\in Z(\operatorname{Ker}(q_{(n-1)\#}))$. But $Z(\operatorname{Ker}(q_{(n-1)\#}))$ is trivial because $\operatorname{Ker}(q_{(n-1)\#})$ is a free group of rank $n-1$. This implies that $x\in \ang{\ft}$ as required.
\end{proof}

We now prove \repr{prop3}.

\pagebreak

\begin{proof}[Proof of \repr{prop3}]
Let $n\geq 3$.
\begin{enumerate}[(a)]
\item Recall that part~(\ref{it:prop3a})(\ref{it:prop3ai}) of \repr{prop3} was proved in \repr{abeliota}, so let us prove part~(\ref{it:prop3aii}). The projection $\map{q_2}{F_n(\rp)}[F_{2}(\rp)]$ onto the first two coordinates gives rise to the Fadell-Neuwirth short exact sequence~\reqref{fnpnp2}. Since $K_{n}=\Gamma_{2}(P_{n}(\rp))$ by \repr{abeliota}, the image of the restriction $q_{2\#}\!\left\vert_{K_{n}}\right.$ of $q_{2\#}$ to $K_{n}$ is the subgroup $\Gamma_{2}(P_{2}(\rp))=\ang{\ft[2]}$, and so we obtain the following commutative diagram:
\begin{equation}\label{eq:fnpnp2ext}
\begin{tikzcd}
1\arrow{r} & K_{n}\cap \rpminus \arrow{r}\arrow{d} & K_{n}  \arrow{r}{q_{2\#}\left\vert_{K_{n}}\right.}\arrow{d} & \ang{\ft[2]} \arrow{r}\arrow{d} & 1\\
1\arrow{r} & \rpminus \arrow{r} & P_{n}(\rp) \arrow{r}{q_{2\#}} & P_{2}(\rp)  \arrow{r} & 1,
\end{tikzcd}
\end{equation}
where the vertical arrows are inclusions. Now $\ang{\ft[2]}\cong \Z_{2}$,  so $K_{n}$ is an extension of the group $\ker{q_{2\#}\!\left\vert_{K_{n}}\right.\!}=K_{n}\cap \rpminus$  by $\Z_2$. The fact that $q_{2\#}(\ft)=\ft[2]$ implies that the upper short exact sequence splits, a section being defined by the correspondence $\ft[2] \mapsto \ft$, and since $\ft\in Z(P_n(\rp))$, the action by conjugation on $\ker{q_{2\#}\!\left\vert_{K_{n}}\right.\!}$ is trivial. Part~(\ref{it:prop3a}) of the proposition follows by taking $L_{n}=\ker{q_{2\#}\!\left\vert_{K_{n}}\right.\!}$ and by noting that $\rpminus$ is torsion free. 

\item Recall first that any torsion element in $P_n(\rp)\setminus \ang{\ft}$ is of order $4$~\cite[Corollary~19 and Proposition~23]{GG3}, and is conjugate in $B_n(\rp)$ to one of $a^n$ or $b^{n-1}$, where $a=\rho_{n}\sigma_{n-1}\cdots \sigma_{1}$ and $b= \rho_{n-1}\sigma_{n-2}\cdots \sigma_{1}$ satisfy:
\begin{equation}\label{eq:anbn}
\text{$a^{n}=\rho_{n}\cdots \rho_{1}$ and $b^{n-1}=\rho_{n-1}\cdots \rho_{1}$}
\end{equation}
by~\cite[Proposition~10]{GG9}. Let $N$ be a normal subgroup of $B_{n}(\rp)$ that satisfies $K_{n} \subsetneqq N \subset P_{n}(\rp)$. We claim that for all $u\in {\pi_1(\prod_1^n\, \rp)}$ (which we identify henceforth with $\Z_{2}\oplus \cdots \oplus \Z_{2}$), exactly one of the following two conditions holds:
\begin{enumerate}[(i)]
\item $N \cap \iota_{\#}^{-1}(\brak{u})$ is empty.
\item\label{it:nint} $\iota_{\#}^{-1}(\brak{u})$ is contained in $N$.
\end{enumerate}
To prove the claim, suppose that $x\in N \cap \iota_{\#}^{-1}(\brak{u})\neq \vide$, and let $y\in \iota_{\#}^{-1}(\brak{u})$. Now $\iota_{\#}(x)=\iota_{\#}(y)=u$, so there exists $k\in K_{n}$ such that $x^{-1}y=k$. Since $K_{n}\subset N$, it follows that $y=xk\in N$, which proves the claim. Further, $\iota_{\#}(a^{n})=(\overline{1},\ldots,\overline{1})$ and $\iota_{\#}(b^{n-1})=(\overline{1},\ldots,\overline{1},\overline{0})$ by \repr{abeliota} and equations~\reqref{rhotauk} and \reqref{anbn}, so by the claim it suffices to prove that there exists $z\in N$ such that $\iota_{\#}(z)\in \brak{(\overline{1},\ldots,\overline{1}), (\overline{1},\ldots,\overline{1}, \overline{0})}$, for then we are in case~(\ref{it:nint}) above, and it follows that one of $a^{n}$ and $b^{n-1}$ belongs to $N$.

It thus remains to prove the existence of such a $z$. Let $x\in N\setminus K_{n}$. Then $\iota_{\#}(x)$ contains an entry equal to $\overline{1}$ because  $K_{n}=\ker{\iota_{\#}}$. If $\iota_{\#}(x)=(\overline{1},\ldots,\overline{1})$ then we are done. So assume that $\iota_{\#}(x)$ also contains an entry that is equal to $\overline{0}$. 
By \req{rhotauk}, there exist $1\leq r<n$ and $1\leq i_{1}<\cdots < i_{r}\leq n$ such that $\iota_{\#}(\rho_{i_{1}}\cdots \rho_{i_{r}})=\iota_{\#}(x)$. It follows from the claim and the fact that $x\in N$ that $\rho_{i_{1}}\cdots \rho_{i_{r}}\in N$ also, and so without loss of generality, we may suppose that $x=\rho_{i_{1}}\cdots \rho_{i_{r}}$. Further, since $\iota_{\#}(x)$ contains both a $\overline{0}$ and a $\overline{1}$, there exists $1\leq j\leq r$ such that $p_{i_{j}\#}(x)=\overline{1}$ and $p_{(i_{j}+1)\#}(x)=\overline{0}$, the homomorphisms $p_{k\#}$ being those defined in the proof of \repr{abeliota}. Note that we consider the indices modulo~$n$, so if $i_{j}=n$ (so $j=r$) then we set $i_{j}+1=1$. By~\cite[page 777]{GG3}, conjugation by $a^{-1}$ permutes cyclically the elements $\rho_{1},\ldots, \rho_{n}, \rho_{1}^{-1},\ldots, \rho_{n}^{-1}$ of $P_{n}(\rp)$, so the $(n-1)\up{th}$ (resp.\ $n\up{th}$) entry of $x'=a^{-(n-1-i_{j})} x a^{(n-1-i_{j})}$ is equal to $\overline{1}$ (resp.\ $\overline{0}$), and $x'\in N$ because $N$ is normal in $B_{n}(\rp)$. Using the relation $b=\sigma_{n-1}a$, we determine the conjugates of the $\rho_{i}$ by $b^{-1}$:
\begin{align*}
b^{-1}\rho_i b &=a^{-1}\sigma_{n-1}^{-1}\rho_{i}\sigma_{n-1}a=a^{-1}\rho_{i}a= \rho_{i+1} \quad\text{for all $1\leq i\leq n-2$}\\
b^{-1}\rho_{n-1}b &=a^{-1}\sigma_{n-1}^{-1}\rho_{n-1}\sigma_{n-1}a=
a^{-1}\sigma_{n-1}^{-1}\rho_{n-1}\sigma_{n-1}^{-1}\ldotp \sigma_{n-1}^{2}a\\
&= a^{-1}\rho_{n} a\ldotp a^{-1}\sigma_{n-1}^{2}a=\rho_{1}^{-1} \ldotp a^{-1} \sigma_{n-1}^{2}a,
\end{align*}
where we have used the relations $\rho_{i}\sigma_{n-1}=\sigma_{n-1}\rho_{i}$ if $1\leq i\leq n-2$ and $\sigma_{n-1}^{-1}\rho_{n-1}\sigma_{n-1}^{-1}=\rho_{n}$ of Van Buskirk's presentation of $B_{n}(\rp)$, as well as the effect of conjugation by $a^{-1}$ on the $\rho_{j}$. Now $\sigma_{n-1}^{2}=A_{n-1,n}\in K_{n}$ by \repr{abeliota}, so $a^{-1} \sigma_{n-1}^{2}a \in K_{n}$ by \rerems{propsK}(\ref{it:propsKa}), and hence $\iota_{\#}(b^{-1}\rho_{n-1}b)=(\overline{1}, \overline{0},\ldots,\overline{0})$. It then follows that $\iota_{\#}(a^{-1}x'a)$ and $\iota_{\#}(b^{-1}x'b)$ have the same entries except in the first and last positions, so if $x''=a^{-1}x'a\ldotp b^{-1}x'b$, we have $\iota_{\#}(x'')= (\overline{1}, \overline{0},\ldots,\overline{0},\overline{1})$. Further, $x''\in N$ since $N$ is normal in $B_{n}(\rp)$. Let $n=2m+\epsilon$, where $m\in \N$ and $\epsilon\in \brak{0,1}$. Then setting
\begin{equation*}
z= a^{-\epsilon}x''a^{\epsilon} \cdotp a^{-(2+\epsilon)}x'' a^{2+\epsilon}\cdots a^{-(2(m-1)+\epsilon)} x'' a^{2(m-1)+\epsilon},
\end{equation*}
we see once more that $z\in N$, and $\iota_{\#}(z)= (\overline{1},\ldots,\overline{1})$ if $n$ is even  and $\iota_{\#}(z)=(\overline{1},\ldots,\overline{1}, \overline{0})$ if $n$ is odd, which completes the proof of the existence of $z$, and thus that of \repr{prop3}(\ref{it:prop3b}).\qedhere
\end{enumerate}
\end{proof}

We end this section by proving \reth{birman}.

\begin{proof}[Proof of \reth{birman}]
Let $S=\St$ or $\rp$. If $n=1$ then $\iota_{\#}$ is an isomorphism and $\im{j_{\#}\left\lvert_{P_{n}}\right.\!}$ is trivial so the result holds. If $n=2$ and $S=\St$ then $P_{n}(\St)$ is trivial, and there is nothing to prove. Now suppose that $S=\St$ and $n\geq 3$. As we mentioned in the introduction, $\ker{\iota_{\#}}=P_{n}(\St)$. Let $(A_{i,j})_{1\leq i<j\leq n}$ be the generating set of $P_{n}$, where $A_{i,j}$ has a geometric representative similar to that given in Figure~\ref{fig:gens}. It is well known that the image of this set by $j_{\#}$ yields a generating set for $P_{n}(\St)$~(\emph{cf.}~\cite[page~616]{Sc}), so $j_{\#}\left\lvert_{P_{n}}\right.$ is surjective, and the statement of the theorem follows. Finally, assume that $S=\rp$ and $n\geq 2$. Once more, $\im{j_{\#}\left\lvert_{P_{n}}\right.}=\setangl{A_{i,j}}{1\leq i<j \leq n}$, and since $A_{i,j}\in \ker{\iota_{\#}}$ by \repr{abeliota}, we conclude that $\ang{\!\!\ang{\,\im{j_{\#}\left\lvert_{P_{n}}\right.\!}}\!\!}_{P_{n}(S)} \subset \ker{\iota_{\#}}$. To prove the converse, first recall from \repr{abeliota} that $\ker{\iota_{\#}}=\Gamma_{2}(P_{n}(\rp))$. Using the standard commutator identities $[x,yz]=[x,y][y,[x,z]] [x,z]$ and $[xy,z]=[x,[y,z]][y,z][x,z]$, $\Gamma_{2}(P_{n}(\rp))$ is equal to the normal closure in $P_{n}(\rp)$ of $\setr{[x,y]}{x,y\in \setl{A_{i,j},\, \rho_{k}}{\text{$1\leq i<j\leq n$ and $1\leq k\leq n$}}}$. It then follows using the relations of \reth{basicpres} that the commutators $[x,y]$ belonging to this set also belong to $\ang{\!\setangl{A_{i,j}}{1\leq i<j \leq n}\!}_{P_{n}(\rp)}$, which is nothing other than $\ang{\!\ang{\,\im{j_{\#}\left\lvert_{P_{n}}\right.\!}}\!}_{P_{n}(S)}$. We conclude by normality that $\ker{\iota_{\#}}\subset \ang{\!\!\ang{\,\im{j_{\#}\left\lvert_{P_{n}}\right.\!}}\!\!}_{P_{n}(S)}$, and this completes the proof of the theorem.
\end{proof}

\section{Some properties of the subgroup $L_{n}$}\label{sec:propsLn}

Let $S=\St$ or $S=\rp$, and for all $m,n\geq 1$, let $\Gamma_{m,n}(S)=P_{m}(S\setminus \brak{x_{1},\ldots,x_{n}})$ denote the $m$-string pure braid group of $S$ with $n$ points removed. In this section, we study $\rpminus$, which is $\Gamma_{n-2,2}(\rp)$, in more detail, and we prove \reth{th4} and \repr{abelianL} that enable us to understand better the structure of the subgroup $L_{n}$ defined in the proof of \repr{prop3}(\ref{it:prop3a})(\ref{it:prop3aii}).

We start by exhibiting a presentation of the group $\Gamma_{m,n}(\rp)$ in terms of the generators of $P_{m+n}(\rp)$ given by \reth{basicpres}. A presentation for $\Gamma_{m,n}(\St)$ is given in~\cite[Proposition~7]{GG11} and will be recalled later in \repr{prespinm}, when we come to proving \reth{prop12}. For $1\leq i<j\leq m+n$, let
\begin{equation}\label{eq:cln}
C_{i,j}= A_{j-1,j}^{-1}\cdots A_{i+1,j}^{-1} A_{i,j} A_{i+1,j} \cdots A_{j-1,j}.
\end{equation}
Geometrically, in terms of Figure~\ref{fig:gens}, $C_{i,j}$ is the image of $A_{i,j}^{-1}$ under the reflection about the straight line segment that passes through the points $x_{1},\ldots, x_{m+n}$. The proof of the following proposition, which we leave to the reader, is similar in nature to that for $\St$, but is a little more involved due to the presence of extra generators that emanate from the fundamental group of $\rp$.

\begin{prop}\label{prop:prej}
Let $n,m\geq 1$. The following constitutes a presentation of the group $\Gamma_{m,n}(\rp)$:\vspace*{-5mm}
\begin{enumerate}
\item[\underline{\textbf{generators:}}] $A_{i,j}$, $\rho_{j}$,  where $1\leq i<j$ and $n+1\leq j \leq m+n$.
\item[\underline{\textbf{relations:}}] 
\mbox{}
\begin{enumerate}[(I)]
\item\label{it:prej1} the Artin relations described by \req{artinaij} among the generators $A_{i,j}$ of $\Gamma_{m,n}(\rp)$.
\item\label{it:prej2} for all $1\leq i<j$ and $n+1\leq j <k\leq m+n$, $A_{i,j}\rho_{k}A_{i,j}^{-1}=\rho_{k}$.
\item\label{it:prej3} for all $1\leq i<j$ and $n+1\leq k<j\leq m+n$,
\begin{equation*}
\rho_{k}A_{i,j}\rho_k^{-1}=
\begin{cases}
A_{i,j}&\text{if $k<i$}\\
\rho_{j}^{-1} C_{i,j}^{-1} \rho_{j} &\text{if $k=i$}\\
\rho_{j}^{-1}C_{k,j}^{-1} \rho_{j}A_{i,j} \rho_{j}^{-1}C_{k,j}\rho_{j} &\text{if $k>i$.}
\end{cases}
\end{equation*}
\item\label{it:prej4} for all $n+1 \leq k<j\leq m+n$, $\rho_{k}\rho_{j}\rho_{k}^{-1}=C_{k,j}\rho_{j}$.
\item\label{it:prej5} for all $n+1\leq j\leq m+n$,
\begin{equation*}
\text{$\rho_{j}\left(\prod_{i=1}^{j-1}\; A_{i,j}\right) \rho_{j}=\left( \prod_{l=j+1}^{m+n}\; A_{j,l}\right)$.}
\end{equation*}
\end{enumerate}
\end{enumerate}
The elements $C_{i,j}$ and $C_{k,j}$ appearing in relations~(\ref{it:prej3}) and~(\ref{it:prej4}) should be rewritten using \req{cln}.
\end{prop}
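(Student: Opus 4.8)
The plan is to obtain the presentation of $\Gamma_{m,n}(\rp)$ as a consequence of the Fadell--Neuwirth fibration $\map{q}{F_{m+n}(\rp)}[F_{n}(\rp)]$ that forgets the last $m$ coordinates, combined with the presentation of $P_{m+n}(\rp)$ given in \reth{basicpres}. First I would recall that $\Gamma_{m,n}(\rp)=\ker{q_{\#}}$, and that the splitting of the exact sequence $1\to \Gamma_{m,n}(\rp)\to P_{m+n}(\rp)\to P_{n}(\rp)\to 1$ (coming from a section of $q$ obtained by adding $m$ trivial strings near a fixed point) means that $P_{m+n}(\rp)$ is a semi-direct product $\Gamma_{m,n}(\rp)\rtimes P_{n}(\rp)$. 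Applying the standard procedure for deriving a presentation of the normal factor of a split extension from a presentation of the whole group, one takes as generators of $\Gamma_{m,n}(\rp)$ the generators $A_{i,j}$ (with $1\le i<j\le m+n$, $j\ge n+1$) and $\tau_{j}$ (equivalently $\rho_{j}$, via the relation in \rerems{propsK}(\ref{it:propsKc})) for $n+1\le j\le m+n$, since these are precisely the generators of \reth{basicpres} that are killed by $q_{\#}$; the relators of $\Gamma_{m,n}(\rp)$ are then the relators of $P_{m+n}(\rp)$ rewritten so that every generator $A_{i,j}$ or $\tau_{k}$ with $j\le n$ (resp.\ $k\le n$) is replaced by the identity, together with the relators expressing the $P_{n}(\rp)$-conjugates of the chosen generators in terms of the chosen generators.

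Next I would go through the relations of \reth{basicpres} case by case. Relation~(\ref{it:rel1}) (the Artin relations among the $A_{i,j}$) restricts directly to relation~(\ref{it:prej1}); one only needs to note that when an index drops to the range $\le n$ the corresponding $A_{r,s}$ becomes trivial, which is consistent with the surviving relations. Relation~(\ref{it:rel4}) splits according to whether the indices lie in the forgotten range or not: the subcase with $k>n$ and $j>n$ gives relations~(\ref{it:prej2}) and~(\ref{it:prej3}) after conjugation by the section and after rewriting $\tau_{j}$ in terms of $\rho_{j}$, while the subcases with $k\le n$ produce the relations describing the $P_{n}(\rp)$-action (which are discarded in the presentation of the kernel). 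Relation~(\ref{it:rel2}) likewise yields relation~(\ref{it:prej4}) when both indices exceed $n$. Relation~(\ref{it:rel3}), $\tau_{j}^{2}=A_{1,j}\cdots A_{j-1,j}A_{j,j+1}\cdots A_{j,m+n}$, becomes relation~(\ref{it:prej5}) once we substitute $\tau_{j}=\rho_{j}^{-1}A_{j,j+1}\cdots A_{j,m+n}$ and use the Artin relations and the definition \req{cln} of the $C_{i,j}$ to move the $A_{j,l}$ factors across; the elements $C_{i,j}$ arise here precisely because conjugating $A_{i,j}$ by $A_{i+1,j}\cdots A_{j-1,j}$ is the geometric reflection described after \req{cln}. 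Throughout, the passage from $\tau$ to $\rho$ is where the $C_{i,j}$'s enter, matching the final remark of the proposition.

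The main obstacle is the bookkeeping in relation~(\ref{it:prej5}) and in the third case of relation~(\ref{it:prej3}): rewriting $\tau_{k}A_{i,j}\tau_{k}^{-1}$ in terms of $\rho_{k}A_{i,j}\rho_{k}^{-1}$ requires conjugating the expression $\tau_{k}=\rho_{k}^{-1}A_{k,k+1}\cdots A_{k,m+n}$ against $A_{i,j}$ and then using relation~(\ref{it:rel1}) repeatedly to collapse the resulting word; the appearance of the single factors $C_{k,j}$ rather than long products is exactly the nontrivial simplification, and it relies on the identity $C_{i,j}=A_{j-1,j}^{-1}\cdots A_{i+1,j}^{-1}A_{i,j}A_{i+1,j}\cdots A_{j-1,j}$ together with the commutation relations among the $A_{\ast,j}$ with the same second index. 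This is a finite but lengthy computation of the same flavour as the corresponding verification for $\St$ in \cite{GG11}, so, as stated in the excerpt, I would carry it out in the same style and leave the routine parts to the reader, emphasising only the points where the non-orientability of $\rp$ (the extra generators $\rho_{j}$ and relation~(\ref{it:prej5})) forces a genuine departure from the $\St$ case.
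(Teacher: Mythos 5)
Your identification of where relations (I)--(V) come from is sound as far as it goes --- they are indeed consequences of the relations of \reth{basicpres} --- but the two load-bearing steps of your strategy for \emph{completeness} both fail. First, the sequence $1\to\Gamma_{m,n}(\rp)\to P_{m+n}(\rp)\to P_{n}(\rp)\to 1$ is not split in general: on a closed surface one cannot ``add $m$ trivial strings near a fixed point'' (the added points must avoid the moving ones), and the splitting problem for precisely these sequences over $\rp$ is a delicate question which is the subject of~\cite{GG4}; already for $n=1$ there is no section, since a section would require an element of order $2$ in $P_{m+1}(\rp)$ mapping onto the generator of $P_{1}(\rp)\cong\Z_{2}$, whereas the unique element of order $2$ is $\ft[m+1]$, which lies in the kernel. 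Second, and more seriously, there is no ``standard procedure'' that presents the kernel of a surjection by taking the relators of the ambient group and setting the quotient-side generators equal to the identity: adding the relators $y=1$ presents the quotient of $P_{m+n}(\rp)$ by the normal closure of those generators (a non-abelian coinvariants group), which is a \emph{proper} quotient of $\Gamma_{m,n}(\rp)$ whenever the $P_{n}(\rp)$-action is nontrivial. Concretely, relation~(c) of \reth{basicpres} with $i\leq n$ reads $\tau_{i}^{2}=A_{1,i}\cdots A_{i-1,i}A_{i,i+1}\cdots A_{i,m+n}$, and your rewriting turns it into $A_{i,n+1}\cdots A_{i,m+n}=1$, which is already false for $m=1$, where $\Gamma_{1,n}(\rp)=\pi_{1}(\rp\setminus\brak{x_{1},\ldots,x_{n}})$ is free and $A_{i,n+1}$ is nontrivial; similarly relation~(d) with $k=i\leq n<j$ would force $A_{i,j}=\tau_{j}^{-1}A_{i,j}^{-1}\tau_{j}$, which is also false there. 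The conjugation relations you propose to adjoin involve the deleted generators, so they are not relators in the chosen generating set at all; and since $\Gamma_{m,n}(\rp)$ has infinite index in $P_{m+n}(\rp)$ as soon as $P_{n}(\rp)$ is infinite, an honest Reidemeister--Schreier rewriting yields infinitely many generators and relators, with no shortcut to the finite presentation asserted.

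The argument the paper intends (it explicitly leaves the details to the reader, describing the proof as ``similar in nature to that for $\St$'', i.e.\ to \cite[Proposition~7]{GG11}) goes the other way round and never extracts a presentation of a subgroup from that of the ambient group. One inducts on $m$ using the one-string Fadell--Neuwirth sequence $1\to\pi_{1}(\rp\setminus\brak{x_{1},\ldots,x_{m+n-1}})\to\Gamma_{m,n}(\rp)\to\Gamma_{m-1,n}(\rp)\to 1$, whose kernel is free, generated by $A_{1,m+n},\ldots,A_{m+n-1,m+n},\rho_{m+n}$ subject only to relation~(V) for $j=m+n$, and applies the standard method for presenting a group extension from presentations of its kernel and quotient: the generators are those of the kernel together with lifts of the generators of $\Gamma_{m-1,n}(\rp)$; the relations are the kernel relation, the conjugates of the kernel generators by the lifted generators expressed as words in the kernel (this is the computation that produces relations (I)--(IV) with $j=m+n$, and is where $C_{k,m+n}$ and the reflection description after \req{cln} enter), and the lifted relators of $\Gamma_{m-1,n}(\rp)$ evaluated in the kernel, which are trivial except for the surface relations, which pick up the extra factor $A_{j,m+n}$ and so give (V). Completeness of (I)--(V) is then automatic from the extension lemma together with the base case $m=1$, and this is exactly the ingredient your outline is missing.
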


In the rest of this section, we shall assume that $n=2$, and we shall focus our attention on the groups $\Gamma_{m,2}(\rp)$, where $m\geq 1$, that we interpret as subgroups of $P_{m+2}(\rp)$ via the short exact sequence~\reqref{fnpnp2}. Before proving \reth{th4} and \repr{abelianL}, we introduce some notation that will be used to study the subgroups $K_{n}$ and $L_{n}$. Let $m\geq 2$, and consider the following Fadell-Neuwirth short exact sequence:
\begin{equation}\label{eq:fns}
1\!\to \Omega_{m+1}\to \rpminus[m] \stackrel{r_{m+1}}{\to} \rpminus[m-1]\to\! 1,
\end{equation}
where $r_{m+1}$ is given geometrically by forgetting the last string, and where $\Omega_{m+1}=\pi_{1}(\rp\setminus \brak{x_{1},\ldots,x_{m+1}}, x_{m+2})$. From the Fadell-Neuwirth short exact sequences of the form of~\req{fnpnp2}, $r_{m+1}$ is the restriction of $\map{q_{(m+1)\#}}{P_{m+2}(\rp)}[P_{m+1}(\rp)]$ to $\ker{q_{2\#}}$. The kernel $\Omega_{m+1}$ of $r_{m+1}$ is a free group of rank $m+1$ with a basis $\mathcal{B}_{m+1}$ being given by:
\begin{equation}\label{eq:gensomega}
\mathcal{B}_{m+1}=\setl{A_{k,m+2}, \rho_{m+2}}{1\leq k\leq m}.
\end{equation}
The group $\Omega_{m+1}$ may also be described as the subgroup of $\rpminus[m]$ generated by $\brak{A_{1,m+2},\ldots,A_{m+1,m+2}, \rho_{m+2}}$ subject to the relation:
\begin{equation}\label{eq:surfaceaij}
A_{m+1,m+2}=A_{m,m+2}^{-1}\cdots A_{1,m+2}^{-1}\rho_{m+2}^{-2},
\end{equation}
obtained from relation~(\ref{it:prej5}) of \repr{prej}. Equations~\reqref{cln} and~\reqref{surfaceaij} imply notably that $A_{l,m+2}$ and $C_{l,m+2}$ belong to $\Omega_{m+1}$ for all $1\leq l\leq m+1$. 
Using geometric methods, for $m\geq 2$, we proved the existence of a section
\begin{equation*}
\map{s_{m+1}}{\rpminus[m-1]}[{\rpminus[m]}]
\end{equation*}
for $r_{m+1}$ in~\cite[Theorem~2(a)]{GG7}. Applying induction to \req{fns}, it follows that for all $m\geq 1$:
\begin{equation}\label{eq:semifn}
\rpminus[m]\cong \Omega_{m+1}\rtimes (\Omega_{m}\rtimes (\cdots \rtimes(\Omega_{3}\rtimes \Omega_{2})\cdots)).
\end{equation}
So $\rpminus[m]\cong \F[m+1]\rtimes (\F[m]\rtimes (\cdots \rtimes(\F[3]\rtimes \F[2])\cdots))$, which may be interpreted as the Artin combing operation for $\rpminus[m]$. It follows from this and \req{gensomega} that $\rpminus[m]$ admits $\mathcal{X}_{m+2}$ as a generating set, where:
\begin{equation}\label{eq:genspnminus2}
\mathcal{X}_{m+2}=\setl{A_{i,j},\, \rho_{j}}{3\leq j\leq m+2,\, 1\leq i\leq j-2}.
\end{equation}

\begin{rem}
For what follows, we will need to know an explicit section $s_{m+1}$ for $r_{m+1}$. Such a section may be obtained as follows: for $m\geq 2$, consider the homomorphism $\rpminus[m]\to \rpminus[m-1]$ given by forgetting the  string based at $x_{3}$. By~\cite[Theorem~2(a)]{GG7}), a geometric section is obtained by doubling the second (vertical) string, so that there is a new third string, and renumbering the following strings, which gives rise to an algebraic section for the given homomorphism of the form:
\begin{align*}
A_{i,j} & \mapsto \begin{cases}
A_{1,j+1} & \text{if $i=1$}\\
A_{2,j+1} A_{3,j+1} & \text{if $i=2$}\\
A_{i+1,j+1} & \text{if $3\leq i<j$}
\end{cases}\\
\rho_{j} &\mapsto \rho_{j+1},
\end{align*}
for all $3\leq j\leq m+1$. However, in view of the nature of $r_{m+1}$, we would like this new string to be in the $(m+2)\up{th}$ position. We achieve this by composing the above algebraic section with conjugation by $\sigma_{m+1}\cdots \sigma_{3}$, which gives rise to a section
\begin{equation*}
\map{s_{m+1}}{\rpminus[m-1]}[{\rpminus[m]}]
\end{equation*}
for $r_{m+1}$ that is defined by: 
\begin{equation}\label{eq:secexplicit}
\left\{\begin{aligned}
s_{m+1}(A_{i,j}) &= \begin{cases}
A_{j,m+2} A_{1,j} A_{j,m+2}^{-1} & \text{if $i=1$}\\
A_{j,m+2} A_{2,j} & \text{if $i=2$}\\
A_{i,j} & \text{if $3\leq i <j$}
\end{cases}\\
s_{m+1}(\rho_{j}) &= \rho_{j} A_{j,m+2}^{-1}.
\end{aligned}\right.
\end{equation}
for all $1\leq i< j$ and $3\leq j\leq m+1$.  A long but straightforward calculation using the presentation of $\rpminus[m]$ given by \repr{prej} shows that $s_{m+1}$ does indeed define a section for $r_{m+1}$. 
\end{rem}

We now prove \reth{th4}, which allows us to give a more explicit description of $L_{n}$.

\begin{proof}[Proof of \reth{th4}]
Let $n\geq 3$. By the commutative diagram~\reqref{fnpnp2ext} of short exact sequences, the restriction of the homomorphism $\map{q_{2\#}}{P_{n}(\rp)}[P_{2}(\rp)]$ to $K_{n}$ factors through the inclusion $\ang{\ft[2]}\to P_{2}(\rp)$, and the kernel $L_{n}$ of $q_{2\#}\!\left\vert_{K_{n}}\right.$ is contained in the group $\rpminus$. We may then add a third row to this diagram:
\begin{equation}\label{eq:fnpnp3}
\begin{tikzcd}[ampersand replacement=\&]
 \& 1 \arrow{d} \& 1\arrow{d} \& 1\arrow{d} \& \\
1\arrow{r} \& L_{n} \arrow{r}\arrow{d} \& K_{n}  \arrow{r}{q_{2\#}\left\vert_{K_{n}}\right.} \arrow{d} \& \ang{\ft[2]} \arrow{r}\arrow{d} \& 1\\
1\arrow{r} \& \rpminus \arrow{r} \ar[dashed]{d}{\widehat{\iota}_{n-2}} \& P_{n}(\rp) \arrow{r}{q_{2\#}} \arrow{d}{\iota_{n\#}} \& P_{2}(\rp)  \arrow{r} \arrow{d}{\iota_{2\#}} \& 1\\
1 \arrow{r} \&\Z_{2}^{n-2} \arrow{r}{j} \ar[dashed]{d} \&\Z_{2}^{n}\arrow{r}{\widehat{q}_{2}}\arrow{d} \&\Z_{2}^{2} \arrow{r} \arrow{d} \& 1,\\
\& 1 \& 1 \& 1 \&
\end{tikzcd}
\end{equation}
where $\map{\widehat{q}_{2}}{\Z_{2}^{n}}[\Z_{2}^{2}]$ is projection onto the first two factors, and $\map{j}{\Z_{2}^{n-2}}[\Z_{2}^{n}]$ is the monomorphism defined by $j(\overline{\epsilon_{1}}, \ldots, \overline{\epsilon_{n-2}})= (\overline{0}, \overline{0},\overline{\epsilon_{1}}, \ldots, \overline{\epsilon_{n-2}})$. The commutativity of diagram~\reqref{fnpnp3} thus induces a homomorphism $\map{\widehat{\iota}_{n-2}}{\rpminus}[\Z_{2}^{n-2}]$ that is the restriction of $\iota_{n\#}$ to $\rpminus$ that makes the bottom left-hand square commute. To see that $\widehat{\iota}_{n-2}$ is surjective, notice that if $x\in \Z_{2}^{n-2}$ then the first two entries of $j(x)$ are equal to $\overline{0}$, and using \req{rhotauk}, it follows that there exist $3\leq i_{1}<\cdots < i_{r}\leq n$ such that $\iota_{n\#}(\rho_{i_{1}}\cdots \rho_{i_{r}})=j(x)$. Furthermore, $\rho_{i_{1}}\cdots \rho_{i_{r}}\in \ker{q_{2\#}}$, and by commutativity of the diagram, we also have $\iota_{n\#}(\rho_{i_{1}}\cdots \rho_{i_{r}})= j\circ \widehat{\iota}_{n-2}\,(\rho_{i_{1}}\cdots \rho_{i_{r}})$, whence $x=\widehat{\iota}_{n-2}\,(\rho_{i_{1}}\cdots \rho_{i_{r}})$ by injectivity of $j$. It remains to prove exactness of the first column. The fact that $L_{n}\subset \ker{\widehat{\iota}_{n-2}}$ follows easily. Conversely, if $x\in \ker{\widehat{\iota}_{n-2}}$ then $x\in \rpminus$, and $x\in K_{n}$ by commutativity of the diagram, so $x\in L_{n}$. This proves the first two assertions of the theorem.

To prove the last part of the statement of the theorem, let $m\geq 1$, and consider \req{fns}. 
Since $\widehat{\iota}_{m}$ is the restriction of $\iota_{(m+2)\#}$ to $\rpminus[m]$, we have $\widehat{\iota}_{m}\,(\rho_{j})=(\overline{0},\ldots, \overline{0}, \underbrace{\overline{1}}_{\mathclap{\text{$(j-2)\up{nd}$ position}}}, \overline{0}, \ldots,\overline{0})$ and $\widehat{\iota}_{m}\,(A_{i,j})=(\overline{0}, \ldots,\overline{0})$ for all $1\leq i<j$ and $3\leq j\leq m+2$. So for each $2\leq l\leq m+1$, $\widehat{\iota}_{m}$ restricts to a surjective homomorphism $\map{\widehat{\iota}_{m}\!\left\lvert_{\Omega_{l}}\right.}{\Omega_{l}}[\Z_{2}]$ of each of the factors of \req{semifn}, $\Z_{2}$ being the $(l-1)\up{st}$ factor of $\Z_{2}^{m}$, and using \req{gensomega}, we see that $\keromega[l]{m}$ is a free group of rank $2l-1$ with basis $\widehat{\mathcal{B}}_{l}$ given by:
\begin{equation}\label{eq:basiskeromega}
\widehat{\mathcal{B}}_{l}=\setl{A_{k,l+1}, \rho_{l+1} A_{k,l+1}\rho_{l+1}^{-1}, \rho_{l+1}^{2}}{1\leq k\leq l-1}.
\end{equation}
As we shall now explain, for all $m\geq 2$, the short exact sequence~\reqref{fns} may be extended to a commutative diagram of short exact sequences as follows:
\begin{equation}\label{eq:fnpnp2exta}
\begin{tikzcd}[cramped, sep=scriptsize]
{}& 1\arrow{d} & 1 \arrow{d} & 1 \arrow{d} &\\
1 \arrow{r} & \keromega[m+1]{m} \arrow{d} \arrow{r} & L_{m+2} \arrow{d} \arrow{r}{r_{m+1}\left\lvert_{L_{m+2}}\right.} & L_{m+1} \arrow{d} \arrow{r} & 1\\
1\arrow{r}& \Omega_{m+1}\arrow{r} \arrow{d}{\widehat{\iota}_{m}\left\lvert_{\Omega_{m+1}}\right.} &\rpminus[m] \arrow[yshift=0.5ex]{r}{r_{m+1}} \arrow{d}{\widehat{\iota}_{m}} &\rpminus[m-1] \arrow{r} \arrow{d}{\widehat{\iota}_{m-1}}\arrow[yshift=-0.5ex,dashrightarrow]{l}{s_{m+1}} & 1\\
1 \arrow{r} & \Z_{2} \arrow{d} \arrow{r} & \Z_{2}^{m} \arrow{d} \arrow{r} & \Z_{2}^{m-1} \arrow{d} \arrow{r} & 1.\\
& 1 & 1 & 1 &
\end{tikzcd}
\end{equation}
To obtain this diagram, we start with the commutative diagram that consists of the second and third rows and the three columns (so \emph{a priori}, the arrows of the first row are missing). The commutativity implies that $r_{m+1}$ restricts to the homomorphism $\map{r_{m+1}\left\lvert_{L_{m+2}}\right.}{L_{m+2}}[L_{m+1}]$, which is surjective, since if $w\in L_{m+1}$ is written in terms of the elements of $\mathcal{X}_{m+1}$ then the same word $w$, considered as an element of the group $\rpminus[m]$, belongs to $L_{m+2}$, and satisfies $r_{m+1}(w)=w$. Then the kernel of $r_{m+1}\left\lvert_{L_{m+2}}\right.$, which is also the kernel of $\widehat{\iota}_{m}\left\lvert_{\Omega_{m+1}}\right.$, is equal to $L_{m+2} \cap \Omega_{m+1}$. This establishes the existence of the complete commutative diagram~\reqref{fnpnp2exta} of short exact sequences. By induction, it follows from~\reqref{basiskeromega} and~\reqref{fnpnp2exta} that for all $m\geq 1$, $L_{m+2}$ is generated by
\begin{equation}\label{eq:genskeromega}
\widehat{\mathcal{X}}_{m+2}=\bigcup_{j=3}^{m+2} \widehat{\mathcal{B}}_{j-1} =\setl{A_{i,j},\, \rho_{j}A_{i,j}\rho_{j}^{-1},\, \rho_{j}^{2}}{3\leq j\leq m+2,\, 1\leq i\leq j-2}.
\end{equation}
Using the section $s_{m+1}$ defined by \req{secexplicit}, we see that $s_{m+1}(x)\in L_{m+2}$ for all $x\in \widehat{\mathcal{X}}_{m+1}$, and thus $s_{m+1}$ restricts to a section $\map{s_{m+1}\left\lvert_{L_{m+1}}\right.}{L_{m+1}}[L_{m+2}]$ for $r_{m+1}\left\lvert_{L_{m+2}}\right.$. We conclude by induction on the first row of~\reqref{fnpnp2exta} that:
\begin{align}
L_{m+2}&\cong \keromega[m+1]{m} \rtimes L_{m+1}\label{eq:semiL}\\
&\cong \keromega[m+1]{m} \rtimes \bigl(\keromega[m]{m} \rtimes \bigl(\cdots \rtimes\bigl(\keromega[3]{m} \rtimes \keromega[2]{m}\bigr)\cdots\bigr)\bigr),\label{eq:semiLlong}
\end{align}
the actions being induced by those of \req{semifn}, so by \req{basiskeromega}, $L_{m+2}$ is isomorphic to a repeated semi-direct product of the form $\F[2m+1]\rtimes (\F[2m-1]\rtimes (\cdots \rtimes(\F[5]\rtimes \F[3])\cdots))$. The last part of the statement of \reth{th4} follows by taking $m=n-2$.
%
\end{proof}

A finer analysis of the actions that appear in equations~\reqref{semifn} and~\reqref{semiLlong} now allows us to determine the Abelianisations of $\rpminus$ and $L_{n}$. 

\begin{proof}[Proof of \repr{abelianL}]
If $n=3$ then the two assertions are clear. So assume by induction that they hold for some $n\geq 3$. From the split short exact sequence~\reqref{fns} and \req{semiL} with $m=n-1$, we have:
\begin{equation}\label{eq:semiprod}
\left\{
\begin{aligned}
\rpminus[n-1] &\cong\Omega_{n} \rtimes_{\psi} \rpminus \quad\text{and}\\
L_{n+1} &\cong \keromega{n-1} \rtimes_{\psi} L_{n},
\end{aligned}\right.
\end{equation}
where $\psi$ denotes the action given by the section $s_{n}$, and the action induced by the restriction of the section $s_{n}$ to $L_{n}$ respectively. 

Before going any further, we recall some general considerations\label{genconsid} from the paper~\cite[pages~3387--88]{GG6} concerning the Abelianisation of semi-direct products. If $H$ and $K$ are groups, and if $\map{\phi}{H}[\aut{K}]$ is an action of $H$ on $K$ then one may deduce easily from~\cite[Proposition~3.3]{GG6} that:
\begin{equation}\label{eq:absemi}
(K\rtimes_{\phi} H)\up{Ab}\cong \Delta(K)\oplus H\up{Ab},
\end{equation}
where:
\begin{equation*}
\Delta(K) =K\bigl/\bigl\langle\Gamma_{2}(K)\cup \widehat{K}\bigr\rangle\bigr. \quad\text{and}\quad
\widehat{K}=\setangl{\phi(h)(k) \cdot k^{-1}}{\text{$h\in H$ and $k\in K$}}.
\end{equation*}
Recall that $\widehat{K}$ is normal in $K$ (\emph{cf.}~\cite[lines~1--4, page~3388]{GG6}), so $\langle\Gamma_{2}(K)\cup \widehat{K}\bigr\rangle $ is normal in $ K$. If $k\in K$, let $\overline{k}$ denote its image under the canonical projection $K\to \Delta(K)$. For all $k,k'\in K$ and $h,h'\in H$, we have:
\begin{align}
\phi(hh')(k)\cdot k^{-1}&= \phi(h)(\phi(h')(k)) \cdot \phi(h')(k^{-1}) \cdot \phi(h')(k)\cdot k^{-1}\notag\\
&=\phi(h)(k'') \cdot k''^{-1} \cdot \phi(h')(k)\cdot k^{-1}\label{eq:hhprime}\\
\phi(h)(kk')\cdot (kk')^{-1}&= \bigl(\phi(h)(k)\cdot k^{-1}\bigr)\cdot k\bigl(\phi(h)(k')\cdot k'^{-1}\bigr)k^{-1}.\label{eq:kkprime}
\end{align}
where $k''=\phi(h')(k)$ belongs to $K$. Let $\mathcal{H}$ and $\mathcal{K}$ be generating sets for $H$ and $K$ respectively. By induction on word length relative to the elements of $\mathcal{H}$, \req{hhprime} implies that $\widehat{K}$ is generated by elements of the form $\phi(h)(k)\cdot k^{-1}$, where $h\in \mathcal{H}$ and $k\in K$. A second induction on word length relative to the elements of $\mathcal{K}$ and \req{kkprime} implies that $\widehat{K}$ is normally generated by the elements of the form $\phi(h)(k)\cdot k^{-1}$, where $h\in \mathcal{H}$ and $k\in \mathcal{K}$. By standard arguments involving group presentations, since $\Gamma_{2}(K) \subset \bigl\langle\Gamma_{2}(K)\cup \widehat{K}\bigr\rangle\bigr.$, $\Delta(K)$ is Abelian, and a presentation of $\Delta(K)$ may be obtained by Abelianising a given presentation of $K$, and by adjoining the relators of the form $\overline{\phi(h)(k) \cdot k^{-1}}$,\label{genconsid2} where $h\in \mathcal{H}$ and $k\in \mathcal{K}$.

We now take $K=\Omega_{n}$ (resp.\ $K=\keromega{n-1}$), $H=\rpminus$ (resp.\ $H= L_{n}$) and $\phi=\psi$. Applying the induction hypothesis and \req{absemi} to \req{semiprod}, to prove parts~(\ref{it:abparta}) and~(\ref{it:abpartb}), it thus suffices to show that:
\begin{align}
\Delta(\Omega_{n}) &\cong \Z^{2}, \quad\text{and that}\label{eq:isof1}\\
\Delta\left(\keromega{n-1}\right) & \cong \Z^{2n-1} \label{eq:isof2}
%
\end{align}
respectively. We first establish the isomorphism~\reqref{isof1}. As we saw previously, $\Delta(\Omega_{n})$ is Abelian, and to obtain a presentation of $\Delta(\Omega_{n})$, we add the relators of the form $\overline{\psi(\tau)(\omega)\cdot \omega^{-1}}$ to a presentation of $(\Omega_{n})\up{Ab}$, where $\tau\in \mathcal{X}_{n}$ and $\omega\in \mathcal{B}_{n}$. In $\Delta(\Omega_{n})$, such relators may be written as:
\begin{equation}\label{eq:newrels}
\overline{s_{n}(\tau) \omega  (s_{n}(\tau))^{-1} \omega^{-1}}= \overline{s_{n}(\tau) \omega  (s_{n}(\tau))^{-1}} \;\overline{\omega^{-1}}.
\end{equation}
We claim that it is not necessary to know explicitly the section $s_{n}$ in order to determine these relators. Indeed, for all $\tau\in \mathcal{X}_{n}$, we have $p_{n+1}(\tau)=\tau$; note that we abuse notation here by letting $\tau$ also denote the corresponding element of $\mathcal{X}_{n+1}$ in $\rpminus[n-1]$. Thus $s_{n}(\tau) \tau^{-1}\in \ker{p_{n+1}}$, and hence there exists $\omega_{\tau}\in \Omega_{n}$ such that $s_{n}(\tau)=\omega_{\tau} \tau$. Since $\Delta(\Omega_{n})$ is Abelian, it follows that:
\begin{equation*}
\overline{s_{n}(\tau) \omega (s_{n}(\tau))^{-1}}= \overline{\omega_{\tau} \tau \omega \tau^{-1}\omega_{\tau}^{-1}}=\overline{\vphantom{\omega_{\tau}^{-1}}\omega_{\tau}}\; \overline{\vphantom{\omega_{\tau}^{-1}}\tau \omega \tau^{-1}} \; \overline{\omega_{\tau}^{-1}}= \overline{\tau \omega \tau^{-1}},
\end{equation*}
and thus the relators of \req{newrels} become:
\begin{equation}\label{eq:relatortau}
\overline{s_{n}(\tau) \omega  (s_{n}(\tau))^{-1} \omega^{-1}}=\overline{\tau \omega \tau^{-1}} \; \overline{\omega^{-1}}.
\end{equation}
This proves the claim. In what follows, the relations~(\ref{it:prej1})--(\ref{it:prej5}) refer to those of the presentation of $\rpminus[n-1]$ given by \repr{prej}. Using this presentation 
and the fact that $\Delta(\Omega_{n})$ is Abelian, we see immediately that $\overline{\tau \omega \tau^{-1}}=\overline{\omega}$ for all $\tau\in \mathcal{X}_{n}$ and $\omega\in \mathcal{B}_{n}$, with the following exceptions:
\begin{enumerate}[(i)]
\item\label{it:relators1} $\tau=\rho_{j}$ and $\omega=A_{j,n+1}$  for all $3\leq j\leq n-1$. Then $\overline{\rho_{j}A_{j,n+1}\rho_{j}^{-1}}=\overline{C_{j,n+1}^{-1}}=\overline{A_{j,n+1}^{-1}}$, using relation~(\ref{it:prej3}) and \req{cln}, which yields the relator $\bigl(\,\overline{A_{j,n+1}}\,\bigr)^{2}$ in $\Delta(\Omega_{n})$.
\item\label{it:relators2} $\tau=\rho_{j}$ and $\omega=\rho_{n+1}$  for all $3\leq j\leq n$. Then $\overline{\rho_{j}\rho_{n+1}\rho_j^{-1}}=\overline{C_{j,n+1} \rho_{n+1}}=\overline{A_{j,n+1}}\;\overline{\vphantom{A_{j,n+1}}\rho_{n+1}}$ by relation~(\ref{it:prej4}) and \req{cln}, which yields the relator $\overline{A_{j,n+1}}$ in $\Delta(\Omega_{n})$.
\end{enumerate}
The relators of~(\ref{it:relators2}) above clearly give rise to those of~(\ref{it:relators1}). To obtain a presentation of $\Delta(\Omega_{n})$, which by \req{gensomega} is an Abelian group with generating set
\begin{equation*}
\setl{\overline{A_{l,n+1}},\overline{\rho_{n+1}}}{1\leq l\leq n-1},
\end{equation*}
we must add the relators $\overline{A_{j,n+1}}$ for all $3\leq j\leq n$. Thus for $j=3, \dots, n-1$, the elements $\overline{A_{j,n+1}}$ of this generating set are trivial. Further, $\overline{A_{n,n+1}}$ is also trivial, but by relation~\reqref{surfaceaij}, one of the remaining generators $\overline{A_{j,n+1}}$ may be deleted, $\overline{A_{2,n+1}}$ say, from which we see that $\Delta(\Omega_{n})$ is a free Abelian group of rank two with $\brak{\overline{A_{1,n+1}},\overline{\rho_{n+1}}}$ as a basis. This establishes the isomorphism~\reqref{isof1}, and so proves part~(\ref{it:abparta}).

We now prove part~(\ref{it:abpartb}). As we mentioned previously, it suffices to establish the isomorphism~\reqref{isof2}. Since $\keromega{n-1}$ is a free group of rank $2n-1$, we must thus show that $\Delta(\keromega{n-1})=(\keromega{n-1})\up{Ab}$. We take $K=\keromega{n-1}$ (resp.\ $H=L_{n-2}$) to be equipped with the basis $\widehat{\mathcal{B}}_{n}$ (resp.\ the generating set $\widehat{\mathcal{X}}_{n}$) of \req{basiskeromega} (resp.\ of \req{genskeromega}). The fact that $\keromega{n-1} $ is normal in $\Omega_{n}$ implies that $A_{l,n+1}$, $\rho_{n+1} A_{l,n+1}\rho_{n+1}^{-1}$, $C_{l,n+1}$ and $\rho_{n+1} C_{l,n+1}\rho_{n+1}^{-1}$ belong to $\keromega{n-1}$ for all $1\leq l\leq n$ by equations~\reqref{cln} and~\reqref{surfaceaij}. Repeating the argument given between equations~\reqref{newrels} and~\reqref{relatortau}, we see that \req{relatortau} holds for all $\tau\in \widehat{\mathcal{X}}_{n}$ and $\omega\in \widehat{\mathcal{B}}_{n}$, where $\overline{\omega}$ now denotes the image of $\omega$ under the canonical projection $\keromega{n-1}\to \Delta(\keromega{n-1})$. For $\alpha\in \rpminus$, let $c_{\alpha}$ denote conjugation in $\keromega{n-1}$ by $\alpha$ (which we consider to be an element of $\rpminus[n-1]$).  The automorphism $c_{\alpha}$ is well defined because $\keromega{n-1}=\Omega_{n} \cap L_{n-1}$, so that $\keromega{n-1}$ is normal in $ \rpminus[n-1]$. We claim that $\bigl\langle\Gamma_{2}(K)\cup \widehat{K}\bigr\rangle\bigr.$ is invariant under $c_{\alpha}$. To see this, note first that $\Gamma_{2}(K)$ is clearly invariant since it is a characteristic subgroup of $K$. On the other hand, suppose that $\omega\in \keromega{n-1}$, $\tau\in L_{n-2}$ and $\alpha\in \rpminus$. Since $s_{n}(\tau)\in L_{n-1}$, $L_{n-1}$ is normal in $ \rpminus[n-1]$ and $L_{n-2} $ is normal in $ \rpminus$, we have $\alpha s_{n}(\tau) \alpha^{-1}\in L_{n-1}$, $\tau'=p_{n+1}(\alpha s_{n}(\tau) \alpha^{-1})=\alpha \tau \alpha^{-1}\in L_{n-2}$, and thus $s_{n}(\tau'^{-1})(\alpha s_{n}(\tau) \alpha^{-1})\in \keromega{n-1}$. Hence there exists $\omega_{\tau'}\in \keromega{n-1}$ such that $\alpha s_{n}(\tau) \alpha^{-1}=s_{n}(\tau') \omega_{\tau'}$. Now $\keromega{n-1}$ is normal in $ \rpminus[n-1]$, so $\omega'=\alpha\omega \alpha^{-1}\in \keromega{n-1}$, and therefore:
\begin{align*}
c_{\alpha}\bigl(s_{n}(\tau)\omega s_{n}(\tau^{-1})\omega^{-1}\bigr)&=\alpha\bigl(s_{n}(\tau)\omega s_{n}(\tau^{-1})\omega^{-1}\bigr)\alpha^{-1} = s_{n}(\tau') \omega_{\tau'} \omega' \omega_{\tau'}^{-1} s_{n}(\tau'^{-1}) \omega'^{-1}\\
&= s_{n}(\tau') (\omega_{\tau'} \omega' \omega_{\tau'}^{-1}) s_{n}(\tau'^{-1}) (\omega_{\tau'} \omega'^{-1} \omega_{\tau'}^{-1}) \cdot \omega_{\tau'} \omega' \omega_{\tau'}^{-1} \omega'^{-1},
\end{align*}
which belongs to $\bigl\langle\Gamma_{2}(K)\cup \widehat{K}\bigr\rangle\bigr.$ because $s(\tau') (\omega_{\tau'} \omega' \omega_{\tau'}^{-1}) s(\tau'^{-1}) (\omega_{\tau'} \omega'^{-1} \omega_{\tau'}^{-1})\in \widehat{K}$ and $\omega_{\tau'} \omega' \omega_{\tau'}^{-1} \omega'^{-1} \in \Gamma_{2}(K)$. This proves the claim, and implies that $c_{\alpha}$ induces an endomorphism $\widehat{c}_{\alpha}$ (an automorphism in fact, whose inverse is $\widehat{c}_{\alpha^{-1}}$) of $\Delta(K)$, in particular, if $\alpha,\alpha'\in \rpminus$ and $\omega\in \keromega{n-1}$ then $\overline{\alpha\alpha' \omega \alpha'^{-1} \alpha^{-1}}= \overline{c_{\alpha\alpha'}(\omega)}= \widehat{c}_{\alpha}(\widehat{c}_{\alpha'}(\overline{\omega}))$.

We next compute the elements $\overline{\tau \omega \tau^{-1}}$ of $\Delta(\keromega{n-1})$ in the case where $\tau=A_{i,j}$, $3\leq j\leq n$ and $1\leq i\leq j-2$, and $\omega\in \widehat{\mathcal{B}}_{n}$:
\begin{enumerate}[(i)]
\item\label{it:actionomega1} Let $\omega=A_{l,n+1}$, for $1\leq l\leq n-1$. Then
\begin{align*}
\tau \omega \tau^{-1}&=\begin{cases}
A_{l,n+1} & \text{if $j<l$ or if $l<i$}\\
A_{l,n+1}^{-1} A_{i,n+1}^{-1} A_{l,n+1}A_{i,n+1} A_{l,n+1}& \text{if $j=l$}\\
A_{j,n+1}^{-1}A_{l,n+1}A_{j,n+1}& \text{if $i=l$}\\
A_{j,n+1}^{-1} A_{i,n+1}^{-1} A_{j,n+1}A_{i,n+1} A_{l,n+1}A_{i,n+1}^{-1} A_{j,n+1}^{-1} A_{i,n+1}A_{j,n+1} & \text{if $i<l<j$}
\end{cases}
\end{align*}
by the Artin relations. We thus conclude that $\overline{\tau \omega \tau^{-1}}=\overline{\omega}$ in this case.

\item If $\omega=\rho_{n+1} A_{l,n+1}\rho_{n+1}^{-1}$, where $1\leq l\leq n-1$ then $\tau \omega \tau^{-1}= \rho_{n+1} (A_{i,j}A_{l,n+1}A_{i,j}^{-1})\rho_{n+1}^{-1}$, and from case~(\ref{it:actionomega1}), we deduce also that $\overline{\tau \omega \tau^{-1}}=\overline{\omega}$.

\item Let $\omega=\rho_{n+1}^{2}$. Then $\tau \omega \tau^{-1}=\omega$, hence $\overline{\tau \omega \tau^{-1}}=\overline{\omega}$.
\end{enumerate}
So if $\tau=A_{i,j}$ then the relators given by \req{relatortau} are trivial for all $\omega\in \widehat{\mathcal{B}}_{n}$, and $\widehat{c}_{A_{i,j}}=\id_{\Delta\left(\keromega{n-1}\right)}$.

Now suppose that $\tau= \rho_{j}A_{i,j} \rho_{j}^{-1}$, where $3\leq j\leq n$ and $1\leq i\leq j-2$. Then for all $\omega\in \widehat{\mathcal{B}}_{n}$, we have:
\begin{equation*}
\overline{\tau \omega\tau^{-1}}=\overline{c_{\tau}(\omega)}= \widehat{c}_{\rho_{j}}\circ \widehat{c}_{A_{i,j}} \circ \widehat{c}_{\rho_{j}^{-1}}(\overline{\omega})=\overline{\omega},
\end{equation*}
since $\widehat{c}_{A_{i,j}}=\id_{\Delta\left(\keromega{n-1}\right)}$, so $\widehat{c}_{\rho_{j}A_{i,j} \rho_{j^{-1}}}=\id_{\Delta\left(\keromega{n-1}\right)}$.

By \req{genskeromega}, it remains to study the elements of the form $\overline{\tau \omega\tau^{-1}}$, where $\tau=\rho_{j}^2$, $3\leq j\leq n$, and $\omega\in \widehat{\mathcal{B}}_{n}$. Since $\overline{\rho_{j}^2 \omega\rho_{j}^{-2}}= \widehat{c}_{\rho_{j}}^{\mskip 4mu 2}(\overline{\omega})$, we first analyse $\widehat{c}_{\rho_{j}}$.
\begin{enumerate}
\item\label{it:actionker4} If $\omega=A_{l,n+1}$, for $1\leq l\leq n-1$ then by relation~(\ref{it:prej3}) and equations~\reqref{cln} and~\reqref{surfaceaij}, we have:
\begin{align*}
\widehat{c}_{\rho_{j}}(\overline{\omega})&= \widehat{c}_{\rho_{j}}(\overline{A_{l,n+1}})=\overline{\rho_{j} A_{l,n+1} \rho_{j}^{-1}}\\
&=\begin{cases}
\overline{A_{l,n+1}}&\text{if $j<l$}\\
\overline{\rho_{n+1}^{-2} \cdot \rho_{n+1} C_{l,n+1}^{-1}\rho_{n+1}^{-1} \cdot \rho_{n+1}^2}
&\text{if $j=l$}\\
\overline{\rho_{n+1}^{-2} \cdot \rho_{n+1} C_{j,n+1}^{-1} \rho_{n+1}^{-1} \cdot \rho_{n+1}^2 \cdot A_{l,n+1} \cdot \rho_{n+1}^{-2} \cdot \rho_{n+1}  C_{j,n+1} \rho_{n+1}^{-1} \cdot \rho_{n+1}^2}
&\text{if $j>l$}
\end{cases}\\
&= \begin{cases}
\overline{A_{l,n+1}} & \text{if $j\neq l$}\\
\overline{\rho_{n+1} C_{j,n+1}^{-1}\rho_{n+1}^{-1}}=\left(\,\overline{\rho_{n+1} A_{j,n+1}\rho_{n+1}^{-1}}\,\right)^{-1} & \text{if $j=l$.}
\end{cases}
\end{align*}

\item\label{it:actionker5} Let $\omega=\rho_{n+1} A_{l,n+1}\rho_{n+1}^{-1}$, where $1\leq l\leq n-1$. Relation~(\ref{it:prej4}) implies that $\rho_{j} \rho_{n+1} \rho_{j}^{-1}=C_{j,n+1} \rho_{n+1}$, and so by case~(\ref{it:actionker4}) above, we have:
\begin{equation*}
\widehat{c}_{\rho_{j}}(\overline{\omega})=  \widehat{c}_{\rho_{j}}\left(\,\overline{\rho_{n+1} A_{l,n+1}\rho_{n+1}^{-1}}\,\right)=\begin{cases}
\overline{\rho_{n+1} A_{l,n+1}\rho_{n+1}^{-1}} & \text{if $j\neq l$}\\[0.5ex]
\overline{C_{j,n+1}^{-1}}=\left(\,\overline{A_{j,n+1}}\,\right)^{-1} & \text{if $j=l$.}
\end{cases}
\end{equation*}

\item\label{it:actionker6} Let $\omega=\rho_{n+1}^{2}$. By relation~(\ref{it:prej4}) and equations~\reqref{cln} and~\reqref{surfaceaij}, we have:
\begin{align*}
\widehat{c}_{\rho_{j}}(\overline{\omega})&=\widehat{c}_{\rho_{j}}(\overline{\rho_{n+1}^{2}})=\overline{(\rho_{j} \rho_{n+1}\rho_{j}^{-1})^2}=\overline{\rho_{n+1} C_{j,n+1}\rho_{n+1}^{-1} \cdot \rho_{n+1}^{2} \cdot C_{j,n+1}}\\
&= \overline{\rho_{n+1} A_{j,n+1}\rho_{n+1}^{-1} \cdot \rho_{n+1}^{2} \cdot A_{j,n+1}},
\end{align*}
\end{enumerate}
from which we obtain:
\begin{align*}
\widehat{c}_{\rho_{j}^{2}}\Bigl(\,\overline{\rho_{n+1}^{2}}\,\Bigr) &= \widehat{c}_{\rho_{j}}\Bigl(\,\overline{\rho_{n+1} A_{j,n+1}\rho_{n+1}^{-1} \cdot \rho_{n+1}^{2} \cdot A_{j,n+1}}\,\Bigr)\\
&= \overline{A_{j,n+1}^{-1}\cdot\rho_{n+1} A_{j,n+1}\rho_{n+1}^{-1} \cdot \rho_{n+1}^{2} \cdot A_{j,n+1}\cdot \bigl(\rho_{n+1} A_{j,n+1}\rho_{n+1}^{-1}\bigr)^{-1}} = \overline{\rho_{n+1}^{2}}.
\end{align*}
So by \req{basiskeromega}, we also have $\widehat{c}_{\rho_{j}^{2}}=\id_{\Delta\left(\keromega{n-1}\right)}$. Hence for all $\tau\in L_{n-2}$ and $\omega\in \keromega{n-1}$, it follows that $\widehat{c}_{\tau}(\overline{\omega})=\overline{\omega}$ , and thus the relators $\overline{\psi(\tau)(\omega)\cdot \omega^{-1}}$ are all trivial. Since a presentation for $\Delta\left(\keromega{n-1}\right)$ is obtained by Abelianising a given presentation of $\keromega{n-1}$ and adjoining these relators, we conclude that $\Delta\left(\keromega{n-1}\right)=\left(\keromega{n-1}\right)\up{Ab}$. In particular, the fact that $\keromega{n-1}$ is a free group of rank $2n-1$ gives rise to the isomorphism~\reqref{isof2}. This completes the proof of the proposition.
\end{proof}

\begin{rems}\mbox{}\label{rem:artin}
\begin{enumerate}[(a)]
\item\label{it:artin1} An alternative description of $\rpminus$, similar to that of \req{semifn}, but with the parentheses in the opposite order, may be obtained as follows. Let $m\geq 2$ and $q\geq 1$, and consider the following Fadell-Neuwirth short exact sequence:
\begin{multline}\label{eq:fnsalt}
1\to P_{m-1}(\rp\setminus \brak{x_{1},\ldots, x_{q+1}})\to P_{m}(\rp\setminus\brak{x_{1},\ldots, x_{q}})\to\\
P_{1}(\rp\setminus\brak{x_{1},\ldots, x_{q}})\to 1,
\end{multline}
given geometrically by forgetting the last $m-1$ strings. Since the quotient is a free group $\F[q]$ of rank $q$, the above short exact sequence splits, and so
\begin{equation*}
P_{m}(\rp\setminus\brak{x_{1},\ldots, x_{q}}) \cong P_{m-1}(\rp\setminus \brak{x_{1},\ldots, x_{q+1}}) \rtimes \F[q],
\end{equation*}
and thus
\begin{equation}\label{eq:semifnbrak}
P_{n-2}(\rp \setminus \brak{x_1, x_2}))\cong (\cdots ((\F[n-1]\rtimes \F[n-2])\rtimes \F[n-3])\rtimes \cdots \rtimes \F[3])\rtimes \F[2].
\end{equation}
by induction. The ordering of the parentheses thus occurs from the left, in contrast with that of \req{semifn}. The decomposition given by \req{semifn} is in some sense stronger than that of \reqref{semifnbrak}, since in the first case, every factor acts on each of the preceding factors, which is not necessarily the case in \req{semifnbrak}, so \req{semifn} engenders a decomposition of the form~\reqref{semifnbrak}. This is a manifestation of the fact that the splitting of the corresponding Fadell-Neuwirth sequence~\reqref{fns} is non trivial, while that of~\reqref{fnsalt} is obvious. 

\item Note that $L_{4}$, which is the kernel of the homomorphism $\map{\widehat{\iota}_{2}}{\rpminus[2]}[\Z_{2}^{2}]$, is also the subgroup of index $4$ of the group $(B_4(\rp))^{(3)}$ that appears in~\cite[Theorem~3(d)]{GG8}. Indeed, 
by~\cite[equation~(127)]{GG8}, this subgroup of index $4$ is isomorphic to the semi-direct product:
\begin{equation*}
\F[5](A_{1,4},A_{2,4},\rho_{4}^{2}, \rho_{4} A_{1,4}\rho_{4}^{-1},\rho_{4} A_{2,4}\rho_{4}^{-1})\rtimes \F[3](A_{2,3},\rho_{3}^{2}, \rho_{3}A_{2,3}\rho_{3}^{-1}),
\end{equation*}
the action being given by~\cite[equations~(129)--(131)]{GG8}  (the element $B_{i,j}$ of~\cite{GG8} is the element $A_{i,j}$ of this paper).
\end{enumerate}
\end{rems}

\begin{rem}\label{rem:Lnuniqueness}
Using the ideas of the last paragraph of the proof of \repr{prop3}(\ref{it:prop3b}), one may show that $L_{n}$ is not normal in $B_n(\rp)$. Although the subgroup $L_{n}$ is not unique with respect to the properties of the statement of \repr{prop3}(\ref{it:prop3a})(\ref{it:prop3aii}), there are only a finite number of subgroups, $2^{n(n-2)}$ to be precise, that satisfy these properties. To prove this, we claim that the set of torsion-free subgroups $L_{n}'$ of $K_{n}$ such that $K_{n}=L_{n}'\oplus \ang{\ft}$ is in bijection with the set $\setr{\ker{f}}{f\in \operatorname{Hom}(L_{n},\Z_{2})}$. To prove the claim, let $K=K_{n}$, $L=L_{n}$, let $\map{q}{K}[K/L]$ be the canonical surjection, and set 
\begin{equation*}
\Delta=\setr{L'}{\text{$L'<K$, $L'$ is torsion free, and $K= L'\oplus \bigl\langle \ft \bigr\rangle$}}.
\end{equation*}
Clearly $L\in \Delta$, so $\Delta\neq \vide$. Consider the map $\map{\phi}{\Delta}[\setr{\ker{f}}{f\in \operatorname{Hom}(L,\Z_{2})}]$ defined by $\phi(L')=L\cap L'$. This map is well defined, since if $L'=L$ then $\phi(L')=L$ is the kernel of the trivial homomorphism of $\operatorname{Hom}(L,\Z_{2})$, and if $L'\neq L$ then $L' \nsubset L$ since $[K:L']=[K:L]=2$, and so $q\left\lvert_{L'}\right.$ is surjective as $K/L\cong \Z_{2}$. Thus $\ker{q\left\lvert_{L'}\right.}=\phi(L')$ is of index $2$ in $L$, in particular, $\phi(L')$ is the kernel of some non-trivial element of $\operatorname{Hom}(L,\Z_{2})$.

We now prove that $\phi$ is surjective. Let $f\in \operatorname{Hom}(L,\Z_{2})$, and set $L''=\ker{f}$. If $f=0$ then $L''=L$, and $\phi(L)=L''$. So suppose that $f\neq 0$. Then $f$ is surjective, and $L''=\ker{f}$ is of index $2$ in $L$. Let $x\in L\setminus L''$. Then
\begin{equation}\label{eq:lprime}
L=L'' \amalg xL'',
\end{equation}
where $\amalg$ denotes the disjoint union. Since $K=L \amalg \ft  L$, it follows that
\begin{equation}\label{eq:klprime}
K=L'' \amalg x L'' \amalg\ft  L''\amalg x\ft  L'',
\end{equation}
where $\amalg$ denotes the disjoint union. Set $L'=L'' \amalg x\ft  L''$. By \req{lprime}, $x^{2}\ft  L''=\ft x^{2} L''=\ft L''$ because $\ft$ is central and of order $2$, and hence $K=L' \amalg xL'$. Using once more \req{lprime}, we see that $L'$ is a group, and so the equality $K=L' \amalg xL'$ implies that $[K:L']=2$. Further, since the only non-trivial torsion element of $K$ is $\ft$, $L'$ is torsion free by \req{klprime}, and so the short exact sequence $1\to L'\to K\to \Z_{2}\to 1$ splits. Thus $L'\in \Delta$, and $\phi(L')=L''$ using equations~\reqref{lprime} and~\reqref{klprime}.

It remains to prove that $\phi$ is injective. Let $L_{1}',L_{2}'\in \Delta$ be such that $L_{1}'\cap L=\phi(L_{1}')=\phi(L_{2}')= L_{2}'\cap L$. If one of the $L_{i}'$, $L_{1}'$ say, is equal to $L$ then we must also have $L_{2}'=L$ because $L\subset L_{2}'$ and $L$ and $L_{2}'$ have the same index in $K$. So suppose that $L_{i}'\neq L$ for all $i\in\brak{1,2}$. If $i\in \brak{1,2}$ then $L''=\phi(L_{i}')=L\cap L_{i}'=\ker{f_{i}}$ for some non-trivial $f_{i}\in \operatorname{Hom}(L,\Z_{2})$, and thus $[L:L'']=2$. Let us show that $L_{1}'\subset L_{2}'$. Let $x\in L_{1}'$. If $x\in L$ then $x\in L''$, so $x\in L_{2}'$, and we are done. So assume that $x\notin L$, and suppose that $x\notin L_{2}'$. Then $q(x)$ is equal to the non-trivial element of $K/L$, and since $K/L\cong \Z_{2}$ and $\ft\notin L$, we see that $x\ft\in L$. Further, $K= L_{2}' \amalg xL_{2}'$ since $[K:L_{2}']=2$, and so $x\ft \in L_{2}'$ (for otherwise $x\ft\in xL_{2}'$, which implies that $\ft\in L_{2}'$, which is impossible because $L_{2}'$ is torsion free). But then $x\ft \in L\cap L_{2}'=L''$, and hence $x\ft \in L_{1}'$. But this would imply that $\ft \in L_{1}'$, which contradicts the fact that $L_{1}'$ is torsion free. We conclude that $L_{1}'\subset L_{2}'$, and exchanging the rôles of $L_{1}'$ and $L_{2}'$, we see that $L_{1}'=L_{2}'$, which proves that $\phi$ is injective, so is bijective, which proves the claim. Therefore the cardinality of $\Delta$ is equal to the order of the group $H^1(L, \Z_2)$, which is equal in turn to that of $H_1(L, \Z_2)$. By \repr{abelianL}(\ref{it:abpartb}), we have $L\up{Ab}=H_1(L, \Z)\cong \Z^{n(n-2)}$, so $H_1(L, \Z_2)\cong H_1(L, \Z) \otimes \Z_{2}\cong \Z_{2}^{n(n-2)}$, and the number of subgroups of $K$ that satisfy the properties of \repr{prop3}(\ref{it:prop3a}) is equal to $2^{n(n-2)}$ as asserted.
\end{rem}

\section{The virtual cohomological dimension of $B_n(S)$ and $P_n(S)$  for $S=\St,\rp$}\label{sec:vcd}

Let $S=\St$ (resp.\ $S=\rp$), and for all $m,n\geq 1$, let $\Gamma_{n,m}(S)=P_{n}(S\setminus \brak{x_{1},\ldots,x_{m}})$ denote the $n$-string pure braid group of $S$ with $m$ points removed. In order to study various cohomological properties of the braid groups of $S$ and prove \reth{prop12}, we shall study $\Gamma_{n,m}(S)$. To prove \reth{prop12} in the case $S=\St$, by \req{pns2sum}, it will suffice to compute the cohomological dimension of $P_{n-3}(\St\setminus \brak{x_{1},x_{2},x_{3}})$. We recall the following presentation of $\Gamma_{n,m}(\St)$ from~\cite{GG11}. The result was stated for $m\geq 3$, but it also holds for $m\leq 2$.

\begin{prop}[{\cite[Proposition~7]{GG11}}]\label{prop:prespinm}
Let $n,m\geq 1$. The following constitutes a presentation of the group $\Gamma_{n,m}(\St)$:
\begin{enumerate}
\item[\underline{\textbf{generators:}}] $A_{i,j}$, where $1\leq i<j$ and $m+1\leq j\leq m+n$.
\item[\underline{\textbf{relations:}}]\mbox{}
\begin{enumerate}[(i)]
\item\label{it:pinmrelsa} the Artin relations described by \req{artinaij} among the generators $A_{i,j}$ of $\Gamma_{n,m}(\St)$.
\item for all $m+1\leq j\leq m+n$,
$\left( \prod_{i=1}^{j-1}\; A_{i,j}\right) \left( \prod_{k=j+1}^{m+n}\; A_{j,k}\right)=1$.
\end{enumerate}
\end{enumerate}
\end{prop}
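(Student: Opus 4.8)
For $m\geq 3$ the presentation is \cite[Proposition~7]{GG11}, so the plan is to re-run the argument of that paper and to verify that the hypothesis $m\geq 3$ is not actually used, that is, that the presentation is valid for all $m\geq 1$. The proof of \cite[Proposition~7]{GG11} is an induction on $n$ built on the Fadell--Neuwirth fibrations, so I would isolate the only two places where $m\geq 3$ could conceivably intervene: the base case $n=1$, and the existence of a section at each inductive step.

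First I would treat the base case. Here $\Gamma_{1,m}(\St)=\pi_{1}(\St\setminus\brak{x_{1},\ldots,x_{m}})$, which for every $m\geq 1$ is free of rank $m-1$, with the one-relator presentation on generators $A_{1,m+1},\ldots,A_{m,m+1}$ subject to $A_{1,m+1}\cdots A_{m,m+1}=1$; this is precisely the presentation of \repr{prespinm} for $n=1$ (relation~(ii) with $j=m+1$, the second product being empty), and it remains correct for $m=1$ (the trivial group $P_{1}(\R^{2})$) and for $m=2$ (the group $\Z\cong\pi_{1}(S^{1})$). So the base case imposes no restriction on $m$.

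For the inductive step I would pass from $\Gamma_{n-1,m}(\St)$ to $\Gamma_{n,m}(\St)$ via the Fadell--Neuwirth fibration $F_{n}(\St\setminus\brak{x_{1},\ldots,x_{m}})\to F_{n-1}(\St\setminus\brak{x_{1},\ldots,x_{m}})$ that forgets the last point. Because $m\geq 1$, the surface $\St\setminus\brak{x_{1},\ldots,x_{m}}$ is noncompact, hence aspherical, so its configuration spaces are aspherical and the fibration admits a section (place the new point near the deleted point $x_{1}$, away from the remaining strings; equivalently, thicken the punctures to boundary circles). Consequently the induced sequence
\[
1\to\pi_{1}(\St\setminus\brak{x_{1},\ldots,x_{m+n-1}})\to\Gamma_{n,m}(\St)\to\Gamma_{n-1,m}(\St)\to 1
\]
is exact and split, with free kernel $\F[m+n-2]$. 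Choosing the section that keeps the $(m+n)\up{th}$ string vertical and far from the others, this kernel is generated by $A_{1,m+n},\ldots,A_{m+n-1,m+n}$ with defining relation~(ii) for $j=m+n$. I would then invoke the standard fact that a presentation of a split extension $N\rtimes Q$ is obtained from presentations of $N$ and of $Q$ together with the rules expressing conjugation of the generators of $N$ by lifts of the generators of $Q$; fed the inductive presentation of $\Gamma_{n-1,m}(\St)$, this produces exactly the generators $A_{i,j}$ for $1\leq i<j\leq m+n$, the Artin relations~\reqref{artinaij}, and the relations~(ii), the latter for $j<m+n$ acquiring the extra factor $A_{j,m+n}$ as in the statement.

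The hard part, and the only step that genuinely requires attention, is checking that the conjugation formulae — the action of the lifts of the $A_{i,j}$ with $j\leq m+n-1$ on the free kernel — are the same as those used in \cite{GG11}, and that the lift of relation~(ii) for $j<m+n$ picks up precisely the factor $A_{j,m+n}$, without any appeal to $m\geq 3$. I would argue that both are local braid computations in a neighbourhood of the $(m+n)\up{th}$ string: the action is the restriction of the action of $P_{n-1}(\dt)$ on the fundamental group of a punctured disc, which produces the Artin relations and does not see how many punctures lie elsewhere, while relation~(ii) simply records the single surface relation in the fundamental group of a punctured sphere. Hence the bookkeeping of the proof of \cite[Proposition~7]{GG11} carries over verbatim, and the presentation holds for all $m\geq 1$. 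As a consistency check, for $m=1$ one may use relation~(ii) to eliminate each generator $A_{1,j}$ and thereby recover the classical presentation of the Artin pure braid group $P_{n}$.
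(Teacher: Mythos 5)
Your outline is correct and is essentially the argument the paper relies on: the paper gives no proof of \repr{prespinm} beyond citing \cite[Proposition~7]{GG11} and remarking that the hypothesis $m\geq 3$ there is unnecessary, and your plan — rerun the Fadell--Neuwirth induction, note that the base case $\Gamma_{1,m}(\St)$ and the existence of a section (adding a point near a puncture) only require $m\geq 1$, and observe that the conjugation formulae and the lifted surface relations are local computations unaffected by the number of punctures — is precisely the justification for that remark. No gap to flag.
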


Let $N$ denote the kernel of the homomorphism $\Gamma_{n,m}(S) \to \Gamma_{n-1,m}(S)$ obtained geometrically by forgetting the last string. If $S=\St$ (resp.\ $S=\rp$) then $N$ is a free group of rank $m+n-2$ (resp.\ $m+n-1$) and is equal to $\ang{A_{1,m+n},A_{2,m+n},\ldots, A_{m+n-1,m+n}}$ (resp.\ $\ang{A_{1,m+n},A_{2,m+n},\ldots, A_{m+n-1,m+n},\rho_{m+n}}$). Clearly $N$ is normal in $\Gamma_{n,m}(S)$. Further, it follows from relations~(\ref{it:pinmrelsa}) of \repr{prespinm} (resp.\ relations~(\ref{it:prej3}) and~(\ref{it:prej4}) of \repr{prej}) that the action by conjugation of $\Gamma_{n,m}(S)$ on $N$ induces (resp.\ does not induce) the trivial action on the Abelianisation of $N$.
In order to determine the virtual cohomological dimension of the braid groups of $S$ and prove \reth{prop12}, we shall compute the cohomological dimension of a torsion-free finite-index subgroup. In the case of $\St$ (resp.\ $\rp$), we choose the subgroup $\Gamma_{n-3,3}(\St)$ that appears in the decomposition given in \req{pns2sum} (resp.\ the subgroup $\Gamma_{n-2,2}(\rp)$ that appears in \req{fnpnp2}).



\begin{proof}[Proof of \reth{prop12}]
Let $S=\St$ (resp.\ $S=\rp$), let $n>3$ and $k=3$ (resp.\ $n>2$ and $k=2$), and let $k\leq m <n$. Then by \req{pns2sum} (resp.\ \req{fnpnp2}) and \req{sessym}, $\Gamma_{n-m,m}(S)$ is a subgroup of finite index of both $P_{n}(S)$ and $B_{n}(S)$. Further, since $F_{n-m}(S\setminus \brak{x_{1},\ldots,x_{m}})$ is a finite-dimensional CW-complex and an Eilenberg-Mac~Lane space of type $K(\pi,1)$~\cite{FaN}, the cohomological dimension of $\Gamma_{n-m,m}(S)$ is finite, and the first part follows by taking $m=k$.

%

We now prove the second part, namely that the cohomological dimension of $\Gamma_{n-k,k}(S)$ is equal to $n-k$ for all $n>k$. We first claim that $\cd{\Gamma_{m,l}(S)}\leq m$ for all $m\geq 1$ and $l\geq k-1$. The result holds if $m=1$ since $F_1(S\setminus \brak{x_1,\ldots,x_l})$ has the homotopy type of a bouquet of circles, therefore $H^i(F_1(S\setminus \brak{x_1,\ldots,x_l}), A)$ is trivial for all $i\geq 2$ and for any local coefficients $A$, and $H^1(F_1(S\setminus \brak{x_1,\ldots,x_l}), \Z)\neq 0$. Suppose by induction that the result holds for some $m\geq 1$, and consider the Fadell-Neuwirth short exact sequence:
\begin{equation*}
1\to \Gamma_{1,l+m}(S) \to \Gamma_{m+1,l}(S) \to \Gamma_{m,l}(S) \to 1
\end{equation*}
that emanates from the fibration:
\begin{equation}\label{eq:fngamma}
F_1(S\setminus \brak{x_1,\ldots,x_l,z_{1},\ldots,z_{m}})\to F_{m+1}(S\setminus \brak{x_1,\ldots,x_l}) \to F_{m}(S\setminus \brak{x_1,\ldots,x_l})
\end{equation}
obtained by forgetting the last coordinate. By~\cite[Chapter~VIII]{Br}, it follows that:
\begin{equation*}
\cd{\Gamma_{m+1,l}(S)}\leq \cd{\Gamma_{m,l}(S)}+\cd{\Gamma_{1,l+m}(S)}\leq m+1.
\end{equation*}
which proves the claim. In particular, taking $l=k$, we have $\cd{\Gamma_{m,k}(S)}\leq m$. 

To conclude the proof of the theorem, it remains to show that for each $m\geq 1$ there are local coefficients $A$ such that $H^{m}(\Gamma_{m,l}(S), A)\neq 0$ for all $l\geq k$. We will show that this is the case for $A=\Z$. Again by induction suppose that $H^{m}(\Gamma_{m,l}(S), \Z)\neq 0$ for all $l\geq k-1$ and for some $m\geq 1$ (we saw above that this is true for $m=1$). Consider the Serre spectral sequence with integral coefficients associated to the fibration~\reqref{fngamma}. Then we have that
\begin{equation*}
E^{p,q}_2= H^{p}\bigl(\Gamma_{m,l}(S), H^q(F_1(S\setminus \brak{x_1,\ldots,x_l,z_1,\ldots,z_m}),\Z)\bigr).
\end{equation*}
Since $\cd{\Gamma_{m,l}(S)}\leq m$ and $\cd{F_1(S\setminus \brak{x_1,\ldots,x_l,z_1,\ldots,z_m}}\leq 1$ from above, it follows that this spectral sequence has two horizontal lines whose possible non-vanishing terms occur for $0\leq p \leq m$ and $0\leq q \leq 1$. We claim that the group $E^{m,1}_2$ is non trivial. To see this, first note that $H^1(F_1(S\setminus \brak{x_1,\ldots,x_l,z_1,\ldots,z_m}), \Z)$ is isomorphic to the free Abelian group of rank $r=m+l-k+2$, so $r\geq m+2$, and hence $E^{m,1}_2=H^{m}\bigl(\Gamma_{m,l}(S), \Z^r\bigr)$, where we identify $\Z^r$ with (the dual of) $N\up{Ab}$. The action of $\Gamma_{m,l}(S)$ on $N$ by conjugation induces an action of $\Gamma_{m,l}(S)$ on $N\up{Ab}$. Let $H$ be the subgroup of $N\up{Ab}$ generated by the elements of the form $\alpha(x)-x$, where $\alpha\in \Gamma_{m,l}(S)$, $x\in N\up{Ab}$, and $\alpha(x)$ represents the action of $\alpha$ on $x$. Then we obtain a short exact sequence $0\to H\to N\up{Ab} \to N\up{Ab}/H\to 0$ of Abelian groups, and the long exact sequence in cohomology applied to $\Gamma_{m,l}(S)$ yields:
\begin{equation}\label{eq:lescohom}
\begin{tikzcd}[cramped, column sep=small]
\cdots \arrow{r} & H^m(\Gamma_{m,l}(S), N\up{Ab}) \arrow{r} & H^m(\Gamma_{m,l}(S), N\up{Ab}/H) \arrow{r} & H^{m+1}(\Gamma_{m,l}(S), H)\arrow{r} & \cdots.
\end{tikzcd}
\end{equation}
The last term is zero since $\cd{\Gamma_{m,l}(S)}\leq m$, and so the map between the two remaining terms is surjective. Let us determine $N\up{Ab}/H$. If $S=\St$ then from the comments following \repr{prespinm}, the action of $\Gamma_{m,l}(S)$ on $N\up{Ab}$ is trivial, so $H$ is trivial, and $N\up{Ab}/H\cong \Z^r$. So suppose that $S=\rp$. Choosing the basis
\begin{equation*}
\brak{A_{1,m+l+1},A_{2,m+l+1},\ldots, A_{m+l-1,m+l+1},\rho_{m+l+1}}
\end{equation*}
of $N$ and using \repr{prej}, one sees that the action by conjugation of the generators of $\Gamma_{m,l}(S)$ on the corresponding basis elements of $N\up{Ab}$ is trivial, with the exception of that of $\rho_{i}$ on $A_{i,m+l+1}$ for $l+1\leq i\leq m+l-1$, which yields elements $A_{i,m+l+1}^2\in H$ (by abuse of notation, we denote the elements of $N\up{Ab}$ in the same way as those of $N$), and that of $\rho_{i}$ on $\rho_{m+l+1}$, where $l+1\leq i\leq m+l$, which yields elements $A_{i,m+l+1}\in H$. In the quotient $N\up{Ab}/H$ the basis elements $A_{l+1,m+l+1},\ldots, A_{m+l-1,m+l+1}$ thus become zero, and additionally, we have also that $A_{m+l,m+l+1}$ (which is not in the given basis) becomes zero. Hence the  relation $\prod_{i=1}^{m+l}\, A_{i,m+l+1}=\rho_{m+l+1}^{-2}$ is sent to the relation $\prod_{i=1}^{l}\, A_{i,m+l+1}=\rho_{m+l+1}^{-2}$, and so $N\up{Ab}/H$ is generated by (the images of) the elements $A_{1,m+l+1}, \ldots, A_{l,m+l+1}, \rho_{m+l+1}$, subject to this relation (as well as the fact that the elements commute pairwise). It thus follows that $N\up{Ab}/H\cong \Z^l$. Since the induced action of $\Gamma_{m,l}(S)$ on $N\up{Ab}/H$ is trivial, we conclude that 
\begin{equation*}
H^m(\Gamma_{m,l}(S), N\up{Ab}/H)=\bigl( H^m(\Gamma_{m,l}(S), \Z)\bigr)^{s},
\end{equation*}
where $s=m+l$ if $S=\St$ and $s=l$ if $S=\rp$. It then follows from \req{lescohom} that $E^{m,1}_2=H^m(\Gamma_{m,l}(S), N\up{Ab})\neq 0$. Since  $E^{p,q}_2=0$ for all $p>m$ and $q>1$, we have $E^{m,1}_2=E^{m,1}_{\infty}$, thus $E^{m,1}_{\infty}$ is non trivial, and hence $H^{m+1}(\Gamma_{m+1,l}(S), \Z)\neq 0$. This concludes the proof of the theorem.
\end{proof}

We end this paper with a proof of \reco{vcdmcg}.

\begin{proof}[Proof of \reco{vcdmcg}]
Let $S=\St$ (resp.\ $S=\rp$). If $n\geq 3$ (resp.\ $n\geq 2$) then $B_{n}(S)$ and $\mathcal{MCG}(S,n)$ are closely related by the following short exact sequence~\cite{Sc}:
\begin{equation*}
1 \to \bigl\langle \ft\bigr\rangle\to B_{n}(S)\stackrel{\beta}{\to} \mathcal{MCG}(S,n) \to 1,
\end{equation*}
where the kernel is isomorphic to $\Z_{2}$. Now assume that $n\geq 4$ (resp.\ $n\geq 3$), so that $B_{n}(S)$ is infinite. If $\Gamma$ is a torsion-free subgroup of $B_{n}(S)$ of finite index then $\beta(\Gamma)$, which is isomorphic to $\Gamma$, is a torsion-free subgroup of $\mathcal{MCG}(S,n)$ of finite index, and hence the virtual cohomological dimension of $\mathcal{MCG}(S,n)$ is equal to that of $B_{n}(S)$. The result then follows by \reth{prop12}.
\end{proof}

\end{document}